\numberwithin{equation}{section}
\newtheorem{theorem}{Theorem}[section]
\newtheorem{lemma}[theorem]{Lemma}
\newtheorem{prop}[theorem]{Proposition}
\theoremstyle{definition}
\newtheorem{remark}[theorem]{Remark}
\theoremstyle{definition}
\theoremstyle{definition}
\def\dashint{\operatorname%
{\,\,\text{\bf-}\kern-.98em\DOTSI\intop\ilimits@\!\!}}
\def\\det{\text{det}}
\def\.5{\frac{1}{2}}
\newcommand{\RN}[1]{%
  \textup{\uppercase\expandafter{\romannumeral#1}}%
}
\renewcommand{\epsilon}{\varepsilon}
\newcounter{marnote}
\begin{document}
\title[Gradient estimates for the insulated conductivity problem]{Gradient estimates for the insulated conductivity problem: the case of $m$-convex inclusions}


\author[Z.W. Zhao]{Zhiwen Zhao}

\address[Z.W. Zhao]{Beijing Computational Science Research Center, Beijing 100193, China.}
\email{zwzhao365@163.com}


\date{\today} 



\begin{abstract}
We consider an insulated conductivity model with two neighboring inclusions of $m$-convex shapes in $\mathbb{R}^{d}$ when $m\geq2$ and $d\geq3$. We establish the pointwise gradient estimates for the insulated conductivity problem and capture the gradient blow-up rate of order $\varepsilon^{-1/m+\beta}$ with $\beta=[-(d+m-3)+\sqrt{(d+m-3)^{2}+4(d-2)}]/(2m)\in(0,1/m)$, as the distance $\varepsilon$ between these two insulators tends to zero. In particular, the optimality of the blow-up rate is also demonstrated for a class of axisymmetric $m$-convex inclusions. 

\end{abstract}

\maketitle



\section{Introduction}

In this paper, we consider a bounded domain $D\subseteq\mathbb{R}^{d}\,(d\geq3)$ with $C^{2}$ boundary, which contains two $C^{2,\gamma}\,(0<\gamma<1)$ inclusions $D_{1}$ and $D_{2}$ with $\varepsilon$ apart. Moreover, these two inclusions are far away from the exterior boundary $\partial D$. Denote $\Omega:=D\setminus\overline{D_{1}\cup D_{2}}$. The insulated conductivity problem is modeled as follows:
\begin{align}\label{con002}
\begin{cases}
\Delta u=0,&\hbox{in}\;\Omega,\\
\frac{\partial u}{\partial\nu}=0,&\mathrm{on}\;\partial D_{i},\,i=1,2,\\
u=\varphi, &\mathrm{on}\;\partial D,
\end{cases}
\end{align}
where $\varphi\in C^{2}(\partial D)$ is a given boundary data, $\nu$ denotes the unit outer normal to the domain. The solution $u$ represents the voltage potential and its gradient $|\nabla u|$ is called the electric field. The electric field always exhibits high concentration in the thin gap between inclusions.  The interest of this paper lies in establishing the optimal gradient estimates for problem \eqref{con002} with two nearly touching $m$-convex inclusions in dimensions greater than two.

\subsection{Previous works}

Ammari et al.\cite{AKL2005,AKLLL2007} were the first to study the insulated conductivity problem and found that the optimal gradient blow-up rate is $\varepsilon^{-1/2}$ in dimension two. Bao, Li and Yin \cite{BLY2010} utilized a ``flipping" technique to obtain the upper bound on the gradient as follows:
\begin{align}\label{LYL90}
\|\nabla u\|_{L^{\infty}(\Omega)}\leq C\|\varphi\|_{C^{2}(\partial D)}\varepsilon^{-1/2},\quad\text{for any}\;d\geq2.
\end{align}
Subsequently, Yun \cite{Y2016} considered a pair of unit spheres in three dimensions and established the optimal gradient estimates only in the shortest segment between these two insulators, which revealed that the blow-up rate is $\varepsilon^{\frac{\sqrt{2}-2}{2}}$. Li and Yang \cite{LY2021,LY202102} further improved and extended the upper bound \eqref{LYL90} to
\begin{align}\label{TOP002}
\|\nabla u\|_{L^{\infty}(\Omega)}\leq C\|\varphi\|_{C^{2}(\partial D)}\varepsilon^{-1/m+\beta},\quad\text{with}\;d\geq3,\,m\geq2,
\end{align}
for two adjacent $m$-convex insulators, where $\beta>0$ is not explicit. The upper bound in \eqref{TOP002} also shows that the singularity of the electric field will weaken as the interfacial boundaries of insulators become flat. So it is essential and important to study the general $m$-convex insulators from the view of engineering, especially applying in the optimal shape design of insulated materials. The subsequent work \cite{W2021} completed by Weinkove gave an explicit $\beta(d)$, which sharpens the upper bound \eqref{TOP002} in the case of $m=2$ and $d\geq4$. Recently, Dong, Li and Yang \cite{DLY2021} established the optimal gradient estimates and obtained the explicit blow-up rate as follows:
\begin{align*}
|\nabla u|\sim\varepsilon^{\frac{\alpha-1}{2}},\quad\alpha=\frac{-(d-1)+\sqrt{(d-1)^{2}+4(d-2)}}{2},\quad\mathrm{for}\;m=2,\,d\geq3.
\end{align*}
This demonstrates that in the case of $m=2$ and $d=3$, the blow-up rate $\varepsilon^{\frac{\sqrt{2}-2}{2}}$ captured in \cite{Y2016} is also optimal in the whole matrix region, especially covering the shortest segment.

Problem \eqref{con002} is actually the limit equation of the following conductivity model with piecewise constant coefficients
\begin{align}\label{pro006}
\begin{cases}
\mathrm{div}(a_{k}(x)\nabla u_{k})=0,&\mathrm{in}\;D,\\
u_{k}=\varphi,&\mathrm{on}\;\partial D,
\end{cases}\quad a_{k}(x)=&
\begin{cases}
k\in(0,\infty),&\mathrm{in}\;D_{1}\cup D_{2},\\
1,&\mathrm{in}\;\Omega,
\end{cases}
\end{align}
with $\varphi\in C^{2}(\partial D)$, as the conductivity $k$ degenerates to zero. For problem \eqref{pro006}, Dong and Li \cite{DL2019} used Green's function method to capture the explicit dependence of the gradient on the conductivity $k$ and the distance $\varepsilon$ between two disks in two dimensions. This especially answers open problem $(b)$ proposed by Li and Vogelius in \cite{LV2000}. For more related work on the finite coefficients, we refer to \cite{BV2000,DZ2016,CEG2014,KL2019} for the elliptic equation and \cite{LN2003,BASL1999} for the elliptic systems, respectively. In particular, the problem of estimating the gradient of solutions in the presence of closely located inclusions was first posed in \cite{BASL1999}, which concerns a numerical investigation on the Lam\'{e} system arising from composites and motivates greatly the aforementioned and subsequent theoretical studies.

When the conductivity $k\rightarrow\infty$, the limit equation of \eqref{pro006} is called the perfect conductivity equation. There has been a long list of papers devoting to the study on the gradient estimates and asymptotics for the perfect conductivity problem. In the presence of strictly convex inclusions (that is, $m=2$), the blow-up rate of the concentrated field has been proved to be
\begin{align*}
\rho_{d}(\varepsilon)=&
\begin{cases}
\varepsilon^{-1/2},&\mathrm{if}\;d=2,\\
|\varepsilon\ln\varepsilon|^{-1},&\mathrm{if}\;d=3,\\
\varepsilon^{-1},&\mathrm{if}\;d\geq4,
\end{cases}
\end{align*}
see \cite{AKLLL2007,BC1984,BLY2009,AKL2005,Y2007,Y2009,K1993} for $d=2$, \cite{BLY2009,LY2009,BLY2010,L2012} for $d=3$ , and \cite{BLY2009} for $d\geq4$, respectively. For more precise description regarding the singularities of the concentrated field, see \cite{KLY2013,ACKLY2013,KLY2014,LLY2019,LWX2019,BT2013}. The results have also been extended to the general $m$-convex perfect conductors, for example, see \cite{L2020,KLY2015,ZH2021}. For nonlinear equation, we refer to \cite{CS2019,CS201902,G2012}.

\subsection{Main results}
Before listing the main results of this paper, we first fix some notations and parameterize the domain. By picking a coordinate system appropriately, let $D_{1}$ and $D_{2}$ be translations of two touching insulators as follows:
\begin{align*}
D_{1}:=D_{1}^{\ast}+(0',\varepsilon/2),\quad\mathrm{and}\; D_{2}:=D_{2}^{\ast}+(0',-\varepsilon/2),
\end{align*}
where $D_{i}^{\ast}$, $i=1,2,$ satisfy
\begin{align*}
\partial D_{1}^{\ast}\cap\partial D_{2}^{\ast}=\{0\}\subset\mathbb{R}^{d},\quad\mathrm{and}\; D_{i}^{\ast}\subset\{(x',x_{d})\in\mathbb{R}^{d}\,|\,(-1)^{i+1}x_{d}>0\},\quad i=1,2.
\end{align*}
Here and afterwards, we will make use of superscript prime to represent $(d-1)$-dimensional variables and domains (for example, $x'$ and $B'$).

Suppose that for some $R_{0}>0$ independent of $\varepsilon$, let $\partial D_{1}$ and $\partial D_{2}$ around the origin be parameterized by two smooth curved surfaces $(x',\varepsilon/2+h_{1}(x'))$ and $(x',-\varepsilon/2+h_{2}(x'))$, respectively, and $h_{i}$, $i=1,2$ verify that for $m\geq2$ and $\gamma>0$,
\begin{enumerate}
{\it\item[(\bf{H1})]
$h_{1}(x')-h_{2}(x')=\lambda|x'|^{m}+O(|x'|^{m+\gamma}),\;\mathrm{if}\;x'\in B_{2R_{0}}',$
\item[(\bf{H2})]
$|\nabla_{x'}h_{i}(x')|\leq \kappa_{1}|x'|^{m-1},\;\mathrm{if}\;x'\in B_{2R_{0}}',$
\item[(\bf{H3})]
$\|h_{1}\|_{C^{2}(B'_{2R_{0}})}+\|h_{2}\|_{C^{2}(B'_{2R_{0}})}\leq \kappa_{2},$}
\end{enumerate}
where $\lambda$, $\kappa_{1}$ and $\kappa_{2}$ are three positive $\varepsilon$-independent constants. Here and in the following, the notation $O(A)$ implies that $|O(A)|\leq CA$ for some positive constant $C$ independent of $\varepsilon$. For $z'\in B'_{R_{0}},\,0<t\leq2R_{0}$, denote
\begin{align*}
\Omega_{t}(z'):=&\{x\in \mathbb{R}^{d}\,|\,-\varepsilon/2+h_{2}(x')<x_{d}<\varepsilon/2+h_{1}(x'),~|x'-z'|<{t}\}.
\end{align*}
We simplify the notation $\Omega_{t}(0')$ as $\Omega_{t}$ if $z'=0'$, and denote its top and bottom boundaries, respectively, by
\begin{align*}
\Gamma^{+}_{t}:=\{x\in\mathbb{R}^{d}\,|\,x_{d}=\varepsilon/2+h_{1}(x'),\;|x'|<t\},
\end{align*}
and
\begin{align*}
\Gamma^{-}_{t}:=\{x\in\mathbb{R}^{d}\,|\,x_{d}=-\varepsilon/2+h_{2}(x'),\;|x'|<t\}.
\end{align*}
Using the standard elliptic estimates, we get
\begin{align}\label{AZ001}
\|u\|_{C^{1}(\Omega\setminus\Omega_{R_{0}/2})}\leq C.
\end{align}
Then it is sufficient to quantify the singular behavior of $|\nabla u|$ in the small gap $\Omega_{R_{0}/2}$. That is, consider
\begin{align}\label{problem006}
\begin{cases}
\Delta u=0,&\hbox{in}\;\Omega_{2R_{0}},\\
\frac{\partial u}{\partial\nu}=0,&\mathrm{on}\;\Gamma^{\pm}_{2R_{0}},\\
\|u\|_{L^{\infty}(\Omega_{2R_{0}})}\leq1.
\end{cases}
\end{align}

Write
\begin{align}\label{degree}
\alpha=\alpha(d,m):=\frac{-(d+m-3)+\sqrt{(d+m-3)^{2}+4(d-2)}}{2}.
\end{align}
In fact, $\alpha(d,m)$ increases monotonically in terms of $d$, while it is monotonically decreasing in $m$. Moreover, we have
\begin{align*}
\alpha(d,m)=&
\begin{cases}
1-\frac{m}{d}+O(\frac{1}{d^{2}}),&\text{as }d\rightarrow\infty,\;\text{for any given }m\geq2,\\
\frac{d-2}{m}+O(\frac{1}{m^{2}}),&\text{as }m\rightarrow\infty,\;\text{for any given }d\geq3.
\end{cases}
\end{align*}

Unless otherwise stated, in the following we let $C$ be a constant which may differ at each occurrence, depending only on $d,m,\lambda,\gamma,R_{0},\kappa_{1},\kappa_{2}$, but not on $\varepsilon$. The first result is stated as follows.
\begin{theorem}\label{thm001}
Suppose that $D_{1},\,D_{2}\subset D\subseteq\mathbb{R}^{d}\,(d\geq3)$ are defined as above, conditions $\mathrm{(}${\bf{H1}}$\mathrm{)}$--$\mathrm{(}${\bf{H3}}$\mathrm{)}$ hold. Let $u\in H^{1}(\Omega_{2R_{0}})$ be the solution of \eqref{problem006}. Then for a sufficiently small $\varepsilon>0$ and $x\in\Omega_{R_{0}/2}$,
\begin{align}\label{U002}
|\nabla u(x)|\leq C\|u\|_{L^{\infty}(\Omega_{2R_{0}})}(\varepsilon+|x'|^{m})^{\frac{\alpha-1}{m}},
\end{align}
where $\alpha$ is given by \eqref{degree}.
\end{theorem}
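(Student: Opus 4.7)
The plan is to reduce the pointwise gradient bound \eqref{U002} to an oscillation decay estimate for $u$ on local cylinders, and then obtain that oscillation decay by a scale-by-scale iteration whose sharp geometric rate turns out to be exactly the indicial exponent $\alpha$ of \eqref{degree}. By \eqref{AZ001} it is enough to estimate $|\nabla u(x)|$ for $x\in\Omega_{R_0/2}$. For each $z'\in B'_{R_0/2}$ I would work at the natural local scale $\delta:=(\varepsilon+|z'|^{m})^{1/m}$, performing a change of variables that flattens the two boundary graphs $\Gamma^{\pm}_{\delta}(z')$ into nearly parallel planes and then rescales $x'$ by $\delta$ and $x_{d}$ by $\delta^{m}$. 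Under $(\mathbf{H1})$--$(\mathbf{H3})$ the transformed equation becomes uniformly elliptic with smooth coefficients on a unit-size domain with mixed boundary conditions, so standard $C^{1}$ estimates for the Neumann problem yield
\[
|\nabla u(x)|\le C\delta^{-1}\,\mathrm{osc}_{\Omega_{\delta}(z')}u.
\]
In this way \eqref{U002} is reduced to proving $\mathrm{osc}_{\Omega_{\delta}(z')}u\le C\delta^{\alpha}\|u\|_{L^{\infty}(\Omega_{2R_{0}})}$.

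To prove the oscillation decay I would iterate along the dyadic scales $r_{k}:=2^{-k}R_{0}$, stopping once $r_{k}$ reaches the floor scale $\varepsilon^{1/m}$. The heart of the argument is a one-step improvement
\[
\mathrm{osc}_{\Omega_{r_{k+1}}(z')}u\le\theta\,\mathrm{osc}_{\Omega_{r_{k}}(z')}u+Cr_{k}^{\alpha+\sigma}\|u\|_{L^{\infty}(\Omega_{2R_{0}})}
\]
with geometric factor $\theta=2^{-\alpha}$ and some $\sigma>0$ absorbing the $O(|x'|^{m+\gamma})$ perturbation in $(\mathbf{H1})$. Iterating and summing the resulting geometric series give $\mathrm{osc}_{\Omega_{r_{k}}(z')}u\le Cr_{k}^{\alpha}\|u\|_{L^{\infty}(\Omega_{2R_{0}})}$, and matching the two regimes $|z'|\ge\varepsilon^{1/m}$ and $|z'|<\varepsilon^{1/m}$ produces the combined weight $(\varepsilon+|z'|^{m})^{\alpha/m}$.

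The main technical obstacle will be producing exactly the decay factor $\theta=2^{-\alpha}$. My plan is to combine a Caccioppoli-type energy inequality on $\Omega_{r}(z')$ with a sharp Poincar\'e/Neumann eigenvalue estimate on the model horn region between the two $m$-convex graphs $x_{d}=\pm\tfrac{\lambda}{2}|x'|^{m}$. After subtracting the correct slice-average of $u$ and expanding in spherical harmonics on $S^{d-2}$, the eigenvalue problem decouples mode by mode into an ODE in $r=|x'|$ whose indicial equation on the leading non-constant mode is
\[
\alpha(\alpha+d+m-3)=d-2,
\]
that is, \eqref{degree}. Quantifying the spectral gap between this leading mode and the higher modes, and controlling the perturbations coming from the $O(|x'|^{m+\gamma})$ error in $(\mathbf{H1})$, from the deviation of $h_{i}$ from the symmetric model, and from the stopping of the iteration at the floor scale $\varepsilon^{1/m}$, will be the delicate steps that require careful energy bookkeeping in the same spirit as Dong--Li--Yang \cite{DLY2021} for the quadratic case $m=2$, now adapted to the full $m$-convex geometry.
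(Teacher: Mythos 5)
Your skeleton (reduce to an oscillation decay at the scale $(\varepsilon+|z'|^{m})^{1/m}$, obtain the sharp rate from the indicial equation $\alpha(\alpha+d+m-3)=d-2$, then iterate dyadically) is the right one, and it matches the paper's architecture in outline. But there are two genuine gaps. First, your mechanism for the decay factor $2^{-\alpha}$ (Caccioppoli plus a Neumann eigenvalue bound on the model horn, then spherical harmonics ``after subtracting the correct slice-average'') tacitly treats $u$ as if it were already a function of $x'$ alone. The spherical-harmonic decoupling only happens for the vertically averaged function, and passing from $u$ to its slice average produces flux terms involving $\partial_{d}u$ (in the paper, the terms $\overline{a_{id}\partial_{d}v}$ entering $F$). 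Controlling these is not a soft perturbation: the paper first proves $|\partial_{d}u|\leq C$ in $\Omega_{2R_{0}}$ (estimate \eqref{U001}) by combining the conormal condition, ({\bf H2}), the earlier Li--Yang bound \eqref{DM001} and the maximum principle applied to the harmonic function $\partial_{d}u$; only then does the averaged function satisfy $\mathrm{div}(\delta\nabla\bar v)=\mathrm{div}F$ with $F$ obeying \eqref{QL001}, which is what makes the inhomogeneity perturbative. Your proposal contains no substitute for this step, and without it the error at scale $r$ is not $O(r^{\alpha+\sigma})$.

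Second, even granting the reduction, the one-step error $Cr_{k}^{\alpha+\sigma}$ is not available at the outset, because the size of the forcing depends on the gradient bound you are trying to prove. Starting only from \eqref{DM001} (exponent $-1/m$), the forcing is of size $|y'|^{m-1}\delta+|y'|^{m+\gamma}\delta^{-1/m}$, and one pass of the dyadic iteration yields only the exponent $\min\{\alpha,1+\gamma-m\sigma_{0}\}$, typically an improvement by $\gamma<\alpha$. This is why the paper runs a finite bootstrap: after each pass the improved pointwise gradient bound is fed back into the estimate of $F$ (decreasing $\sigma$ by $\gamma/m$), and the whole scheme is repeated until the exponent reaches $\alpha$. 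Your single geometric-series summation with error exponent $\alpha+\sigma$ therefore begs the question. Two smaller points: after your anisotropic rescaling ($x'$ by $\delta$, $x_{d}$ by $\delta^{m}$) the operator is degenerate, not uniformly elliptic, so ``standard $C^{1}$ Neumann estimates'' do not apply as stated; the correct route is the flattening \eqref{SCALING}, an isotropic rescaling, and the even/periodic reflection combined with Li--Nirenberg type estimates, as in the paper's final step. Also, the sharp rate $\alpha$ is tied to the weight $\varepsilon+|y'|^{m}$ centered at the origin, so the decay must be run on balls centered at $0'$ and then transferred to $\Omega_{c\delta_{0}^{1/m}}(z')$ by inclusion, rather than iterated on balls centered at $z'$.
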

\begin{remark}
Previously in \cite{LY2021,LY202102}, Li and Yang established the pointwise upper bound on the gradient as follows:
\begin{align}\label{DM001}
|\nabla u(x)|\leq C(\varepsilon+|x'|^{m})^{-1/m+\beta},\quad\mathrm{in}\;\Omega_{R_{0}/2},
\end{align}
for some inexplicit $\beta>0$. We further improve the upper bound in \eqref{DM001} by solving the explicit value of $\beta$, that is, $\beta=\frac{\alpha}{m}$, as shown in Theorem \ref{thm001}.
\end{remark}

\begin{remark}
It is worthwhile to emphasize that by using the change of variables \eqref{SCALING} below for every line segment in the narrow region $\Omega_{R_{0}/2}$ in the whole Section \ref{SEC02} which is different from the idea in \cite{DLY2021}, we avoid dividing into two cases to complete the proof of Theorem \ref{thm001} in the following and thus succeeds in simplifying the proof procedure in \cite{DLY2021}.

\end{remark}

\begin{remark}
The shape of inclusions considered in Theorem \ref{thm001} covers a class of axisymmetric inclusions as follows. To be precise, $\partial D_{1}$ and $\partial D_{2}$ are, respectively, expressed as
\begin{align}\label{ellipsoids}
|x'|^{m}+|x_{d}-\varepsilon/2-r_{1}|^{m}=r_{1}^{m},\quad\mathrm{and}\;|x'|^{m}+|x_{d}+\varepsilon/2+r_{2}|^{m}=r^{m}_{2},
\end{align}
where $r_{1}$ and $r_{2}$ are two positive $\varepsilon$-independent constants. Using Taylor expansion for \eqref{ellipsoids}, we have
\begin{align*}
h_{1}(x')-h_{2}(x')=\lambda_{0}|x'|^{m}+O(|x'|^{2m}),\quad\mathrm{in}\;\Omega_{r_{0}},
\end{align*}
where $\lambda_{0}=\frac{1}{m}\left(\frac{1}{r_{1}^{m-1}}+\frac{1}{r_{2}^{m-1}}\right)$ and $0<r_{0}<\min\{r_{1},r_{2}\}$.

\end{remark}

In order to prove the optimality of the blow-up rate $\varepsilon^{\frac{\alpha-1}{m}}$ obtained in Theorem \ref{thm001}, we now consider two ellipsoids in \eqref{ellipsoids} with $r_{1}=r_{2}=1$. The optimal lower bound on the gradient is established as follows.
\begin{theorem}\label{thm002}
For $d\geq3$, let $D:=B_{5}$ and $D_{i}$, $i=1,2$ be defined by \eqref{ellipsoids} with $r_{1}=r_{2}=1$. Let $u\in H^{1}(\Omega)$ be the solution of (\ref{con002}) with $\varphi=x_{1}$. Then for a sufficiently small $\varepsilon>0$,
\begin{align}\label{ARQZ001}
\|\nabla u\|_{L^{\infty}(\Omega\cap B_{2\sqrt[m]{\varepsilon}})}\geq\frac{1}{C}\varepsilon^{\frac{\alpha-1}{m}},
\end{align}
where $\alpha$ is given by \eqref{degree}.
\end{theorem}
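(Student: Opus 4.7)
The plan is to adapt the strategy developed by Dong, Li and Yang \cite{DLY2021} for the case $m=2$ to general $m\geq 2$. The exponent $\alpha$ in \eqref{degree} will arise as the indicial root of a separated-variable solution of an effective ``thin-gap'' equation in $(d-1)$ dimensions, and the lower bound will be established by combining this explicit singular mode with a matching argument to the exterior problem.

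First, one exploits the symmetries of the configuration. Since $\varphi=x_1$ is odd in $x_1$ and the domain $D=B_5$ together with the ellipsoidal inclusions is invariant both under $x_1\mapsto -x_1$ and under rotations in $x''=(x_2,\ldots,x_{d-1})$, the solution $u$ inherits these symmetries. In particular $u(0)=0$ and $\partial_{i}u(0)=0$ for every $i\geq 2$, so \eqref{ARQZ001} reduces to showing the pointwise lower bound $|\partial_{1}u(0)|\geq C^{-1}\varepsilon^{(\alpha-1)/m}$ at the origin (the center of the bottleneck).

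Next, in the rescaled coordinates $z'=x'/\varepsilon^{1/m}$, the leading-order $x_d$-averaged behavior of $u$ in the gap is governed by an effective $(d-1)$-dimensional elliptic equation of the form $\mathrm{div}\bigl(\tilde\delta\,\nabla V\bigr)=0$ with $\tilde\delta(z')=1+\lambda|z'|^m$. This equation admits a separated-variable solution $V(z')=z_{1}f(|z'|)$ whose asymptotic behavior is $f(\sigma)\sim \sigma^{\alpha-1}$ as $\sigma\to\infty$; substituting this ansatz into the equation forces the indicial relation
\[
\alpha(\alpha+d+m-3)=d-2,
\]
whose positive root is exactly \eqref{degree}. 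Unscaling motivates the auxiliary function
\[
\bar u(x):=x_{1}\bigl(\varepsilon+\lambda|x'|^{m}\bigr)^{(\alpha-1)/m},
\]
which, after a small $x_d$-profile correction enforcing the Neumann condition, is harmonic in $\Omega_{R_0}$ and Neumann-compatible on $\Gamma^{\pm}_{R_0}$ up to errors controlled by hypothesis (H1) and the $C^{2,\gamma}$-regularity of $\partial D_i$, and satisfies $|\partial_{1}\bar u(0)|\sim \varepsilon^{(\alpha-1)/m}$.

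The final step is to decompose $u=C_{*}(\varepsilon)\,\bar u+w$, with $C_{*}$ fixed by an $L^{2}$-type matching condition on $\{|x'|=R_{0}\}\cap \Omega$, and to estimate the remainder. Theorem~\ref{thm001} together with a barrier/iteration argument yields $|\nabla w(0)|\leq C\varepsilon^{(\alpha-1)/m+\eta}$ for some $\eta>0$, of strictly lower order than $|\partial_1 \bar u(0)|$, so \eqref{ARQZ001} follows once $|C_{*}|\geq C^{-1}$ uniformly in $\varepsilon$. This nondegeneracy is established by testing $\Delta u=0$ against an auxiliary function supported in the exterior region $\Omega\setminus \Omega_{R_0}$ that extracts the odd-in-$x_1$ dipole component of $\varphi=x_1$; the explicit form of the boundary data guarantees a positive, $\varepsilon$-independent $x_1$-flux through the gap, pinning $|C_{*}|\gtrsim 1$. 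I expect the principal obstacle to be precisely this uniform nondegeneracy $|C_{*}(\varepsilon)|\geq C^{-1}$: it requires delicate matched asymptotics across the transition zone $|x'|\sim \varepsilon^{1/m}$, where the balance between $\varepsilon$ and $\lambda|x'|^{m}$ in $\tilde\delta$ switches regime, together with quantitative control of the exterior Neumann problem. A secondary technical point absent for $m=2$ is that, when $m\geq 3$, the higher-order curvature corrections in the Neumann condition on $\Gamma^{\pm}_{R_0}$ no longer automatically vanish and must be handled explicitly when building the $x_d$-profile of $\bar u$.
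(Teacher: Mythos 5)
Your outline correctly identifies the skeleton that the paper also uses (odd-in-$x_1$ symmetry, reduction to the first spherical-harmonic mode in the thin gap, the indicial equation $\alpha^{2}+(d+m-3)\alpha-(d-2)=0$ giving \eqref{degree}), but it leaves the decisive step unproved. You yourself flag it: the uniform nondegeneracy $|C_{*}(\varepsilon)|\geq C^{-1}$ of the coefficient of the singular mode. Your suggestion --- testing $\Delta u=0$ against an exterior auxiliary function to extract an ``$\varepsilon$-independent $x_1$-flux through the gap'' --- is not carried out, and it is far from clear it can be made to work: in the insulated problem the flux through the bottleneck is exactly the quantity whose size is in question, so a soft duality/flux argument does not by itself pin $C_{*}$ away from zero. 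This is precisely where the paper does something concrete and different: it passes to $\tilde{u}(r,x_d)=\int_{\mathbb{S}^{d-2}}uY_{1,1}\,d\xi$, observes that $\tilde{v}(r)=r$ is an exact solution of the reduced equation in the $(r,x_d)$ half-plane with $\partial\tilde v/\partial\nu<0$ on $\partial\tilde D_i$, and concludes by the maximum principle that $\tilde u\geq r$, hence $U_{1,1}(r)\geq r$; combined with the two-sided bounds $\min\{r,\cdot\}<g(r)<r^{\alpha}$ for the exact radial profile $g$ of Lemma \ref{lemma006} and the representation $U_{1,1}=C_{1}(\varepsilon)g+O(r^{m-1+\alpha})$ (obtained by variation of parameters, not by an approximate ansatz), this forces $C_{1}(\varepsilon)\geq\tfrac12 r_0^{1-\alpha}$. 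Without an argument of this kind your proof does not close.

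Two secondary gaps: (i) your ansatz $x_{1}(\varepsilon+\lambda|x'|^{m})^{(\alpha-1)/m}$ is not an exact solution of the effective equation $\mathrm{div}(\delta\nabla\cdot)=0$ in the transition zone $|x'|\sim\varepsilon^{1/m}$, so the decomposition $u=C_{*}\bar u+w$ with $|\nabla w(0)|\leq C\varepsilon^{(\alpha-1)/m+\eta}$ is asserted rather than proved; the paper avoids this by working with the exact ODE solution $g$ and controlling the inhomogeneous term $H$ through the explicit Duhamel formula together with \eqref{DAM001}--\eqref{DAM002}. (ii) Reducing \eqref{ARQZ001} to a pointwise bound on $|\partial_1 u(0)|$ at the origin is a strictly stronger statement than the theorem and is never justified; the paper only shows $|u|\gtrsim\varepsilon^{\alpha/m}$ at some point with $|x'|=\varepsilon^{1/m}$ and uses $u(0)=0$ to get the sup-norm lower bound over $\Omega\cap B_{2\sqrt[m]{\varepsilon}}$, which is all that is needed.
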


In the following, we will establish the optimal pointwise upper and lower bounds on the gradient in Sections \ref{SEC02} and \ref{SEC03}, respectively.

\section{Pointwise upper bound on the gradient}\label{SEC02}
Without loss of generality, we let $\lambda=1$ in condition ({\bf{H1}}). Denote
\begin{align*}
\delta:=\delta(|y'|)=\varepsilon+|y'|^{m},\quad |y'|\leq 2R_{0}.
\end{align*}
For any fixed $x_{0}=(x_{0}',x_{d})\in\Omega_{R_{0}/2}$, write
\begin{align*}
\delta_{0}:=\delta(|x_{0}'|)=\varepsilon+|x_{0}'|^{m}.
\end{align*}
Define a cylinder as follows: for $s,t>0$ and $x'\in B'_{R_{0}}$,
\begin{align*}
Q_{s,t}(x'):=\{y=(y',y_{d})\in\mathbb{R}^{d}\,|\,|y'-x'|<s,\,|y_{d}|<t\}.
\end{align*}
For simplicity, let $Q_{s,t}:=Q_{s,t}(0')$ if $x'=0'$. Using a change of variables for $\Omega_{2R_{0}}$ as follows:
\begin{align}\label{SCALING} 
\begin{cases}
y'=x',\\
y_{d}=2\delta_{0}\left(\frac{x_{d}-h_{2}(x')+\varepsilon/2}{\varepsilon+h_{1}(x')-h_{2}(x')}-\frac{1}{2}\right),
\end{cases}
\end{align}
we derive a cylinder $Q_{2R_{0},\delta_{0}}$ of thickness $2\delta_{0}$. Write $v(y)=u(x)$. In light of equation \eqref{problem006}, we see that $v$ is a solution of
\begin{align}\label{ZKM001}
\begin{cases}
-\partial_{i}(a_{ij}(y)\partial_{j}v(y))=0,&\mathrm{in}\;Q_{2R_{0},\delta_{0}},\\
a_{dj}(y)\partial_{j}v(y)=0,&\mathrm{on}\;\{y_{d}=\pm\delta_{0}\},
\end{cases}
\end{align}
with $\|v\|_{L^{\infty}(Q_{R_{0},\delta_{0}})}\leq1$, where
\begin{gather}
\begin{align*}
(a_{ij}(y))=&\frac{2\delta_{0}(\partial_{x}y)(\partial_{x}y)^{t}}{\det(\partial_{x}y)}\notag\\
=&
\begin{pmatrix}\delta&0&\cdots&0&a_{1d} \\ 0&\delta&\cdots&0&a_{2d}\\ \vdots&\vdots&\ddots&\vdots&\vdots\\0&0&\cdots&\delta&a_{d-1\,1}\\ a_{d1}&a_{d2}&\cdots&a_{d\,d-1}&\frac{4\varepsilon^{2}+\sum^{d-1}_{i=1}a_{id}^{2}}{\delta}
\end{pmatrix}+\begin{pmatrix} e^{1}&0&\cdots&0 \\ 0&e^{2}&\cdots&0 \\ \vdots&\vdots&\ddots&\vdots\\
0&0&\cdots&e^{d}
\end{pmatrix},
\end{align*}
\end{gather}
whose elements satisfy that for $i=1,...,d-1,$ using conditions ({\bf{H1}}) and ({\bf{H2}}),
\begin{align}\label{K001}
|a_{id}|=|a_{di}|=|-2\delta_{0}\partial_{i}h_{2}(y')-(y_{d}+\delta_{0})\partial_{i}(h_{1}-h_{2})(y')|\leq C\delta_{0}|y'|^{m-1},
\end{align}
and
\begin{align}\label{K002}
|e^{i}|=|O(|y'|^{m+\gamma})|\leq C|y'|^{m+\gamma},\quad |e^{d}|\leq C\delta_{0}^{2}|y'|^{\gamma}\delta^{-1}.
\end{align}

In light of $\frac{\partial u}{\partial\nu}=0$ on $\Gamma^{\pm}_{2R_{0}}$, it follows from ({\bf{H2}}) and \eqref{DM001} that
\begin{align*}
|\partial_{d}u(x)|\leq C|x'|^{m-1}|\nabla_{x'}u|\leq C|x'|^{m-2},\quad\mathrm{on}\;\Gamma^{\pm}_{2R_{0}}.
\end{align*}
This, in combination with \eqref{AZ001}, the harmonicity of $\partial_{d}u$ and the maximum principle, yields that
\begin{align}\label{U001}
|\partial_{d}u|\leq C,\quad\mathrm{in}\;\Omega_{2R_{0}},
\end{align}
and then
\begin{align}\label{AZ005}
|\partial_{d} v|\leq C\delta_{0}^{-1}\delta,\quad\mathrm{in}\;Q_{2R_{0},\delta_{0}}.
\end{align}
Let
\begin{align}\label{KL01}
\bar{v}(y'):=\fint^{\delta_{0}}_{-\delta_{0}}v(y',y_{d})\,dy_{d}.
\end{align}
Then $\bar{v}$ verifies
\begin{align}\label{ZK003}
\mathrm{div}(\delta\nabla\bar{v})=\mathrm{div}F,\quad\mathrm{in}\;B'_{2R_{0}},
\end{align}
where $F=(F_{1},...,F_{d-1})$, $F_{i}:=-\overline{a_{id}\partial_{d}v}-e^{i}\partial_{i}\bar{v}$ for $i=1,...,d-1$, $\overline{a_{id}\partial_{d}v}$ represents the average of $a_{id}\partial_{d}v$ with respect to $y_{d}$ on $(-\delta_{0},\delta_{0})$. From \eqref{DM001} and \eqref{K001}--\eqref{AZ005}, we obtain
\begin{align}\label{QL001}
|F_{i}|\leq C\left(|y'|^{m-1}\delta+|y'|^{m+\gamma}\delta^{-1/m}\right),\;\, i=1,...,d-1,\quad\mathrm{in}\;B'_{2R_{0}}.
\end{align}
For $\gamma,\sigma\in\mathbb{R}$, define a norm as follows:
\begin{align}\label{QL002}
\|F\|_{\varepsilon,\gamma,\sigma,B_{R}'}:=\sup\limits_{y'\in B_{R}'}|y'|^{-\gamma}(\varepsilon+|y'|^{m})^{\sigma-1}|F(y')|,\;\,\mathrm{with}\;0<R\leq2R_{0}.
\end{align}

For any $0<R\leq 2R_{0}$, we decompose the solution $\bar{v}$ of \eqref{ZK003} as follows:
\begin{align}\label{ADAD001}
\bar{v}:=\bar{v}_{1}+\bar{v}_{2},\quad\mathrm{in}\;B_{R}',
\end{align}
where $\bar{v}_{i},i=1,2,$ satisfy
\begin{align}\label{de001}
\begin{cases}
\mathrm{div}(\delta\nabla\bar{v}_{1})=0,& \mathrm{in}\;B_{R}',\\
\bar{v}_{1}=\bar{v},&\mathrm{on}\;\partial B_{R}',
\end{cases}
\end{align}
and
\begin{align}\label{de002}
\begin{cases}
\mathrm{div}(\delta\nabla\bar{v}_{2})=\mathrm{div}F,&\mathrm{in}\;B_{R}',\\
\bar{v}_{2}=0,&\mathrm{on}\;\partial B_{R}',
\end{cases}
\end{align}
respectively.

For $\bar{v}_{1}$, we have
\begin{lemma}\label{lemma001}
For $d\geq 3$, let $\bar{v}_{1}$ be a solution of \eqref{de001}. Then for any $0<\rho<R$,
\begin{align*}
\left(\fint_{\partial B_{\rho}'}|\bar{v}_{1}-\bar{v}_{1}(0')|^{2}\right)^{\frac{1}{2}}\leq\left(\frac{\rho}{R}\right)^{\alpha}\left(\fint_{\partial B_{R}'}|\bar{v}_{1}-\bar{v}_{1}(0')|^{2}\right)^{\frac{1}{2}},
\end{align*}
where $\alpha$ is defined by \eqref{degree}.

\end{lemma}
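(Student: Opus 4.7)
Because the weight $\delta(|y'|)=\varepsilon+|y'|^{m}$ is radial, the operator $\mathrm{div}(\delta\nabla\,\cdot\,)$ commutes with orthogonal projection onto angular modes. My plan is to expand $w:=\bar v_{1}-\bar v_{1}(0')$ in spherical harmonics on the unit sphere $S^{d-2}\subset\mathbb{R}^{d-1}$, reduce to a family of radial Sturm--Liouville ODEs (one per angular mode $d_{k}$), dominate each radial profile by the explicit comparison function $r^{\alpha_{k}}$ whose exponent comes from the Euler equation at $\varepsilon=0$, and recombine with Parseval's identity. The worst (slowest) rate occurs at $d_{k}=1$, producing exactly the $\alpha$ in \eqref{degree}.

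\textbf{Step 1 (harmonic decomposition).} Let $\{Y_{k}\}$ be an $L^{2}(S^{d-2})$-orthonormal basis of spherical harmonics with $-\Delta_{S^{d-2}}Y_{k}=d_{k}(d_{k}+d-3)Y_{k}$. Write $w(r,\theta)=\sum_{k\ge0}a_{k}(r)Y_{k}(\theta)$ with $r=|y'|$, $\theta=y'/r$. Substituting into \eqref{de001}, orthogonality of the $Y_{k}$ forces each $a_{k}$ to satisfy
\begin{equation*}
\bigl(r^{d-2}\delta(r)\,a_{k}'(r)\bigr)' \;=\; r^{d-4}\,d_{k}(d_{k}+d-3)\,\delta(r)\,a_{k}(r),\qquad r\in(0,R).
\end{equation*}
For $d_{k}=0$ the equation reads $(r^{d-2}\delta a_{0}')'=0$, whose only solution bounded at $r=0$ (when $d\ge3$) is a constant; since $a_{0}(0)=w(0)=0$, that constant is zero. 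Thus the sum effectively runs over $d_{k}\ge1$.

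\textbf{Step 2 (supersolution comparison).} For each $d_{k}\ge1$ let $\alpha_{k}>0$ be the positive root of $p^{2}+(d+m-3)p-d_{k}(d_{k}+d-3)=0$, so $\alpha_{k}$ is increasing in $d_{k}$ and $\alpha_{1}=\alpha$. A direct calculation, using the identity $\alpha_{k}(\alpha_{k}+d+m-3)=d_{k}(d_{k}+d-3)$, gives
\begin{equation*}
\bigl(r^{d-2}\delta\,(r^{\alpha_{k}})'\bigr)' \;-\; r^{d-4}d_{k}(d_{k}+d-3)\,\delta\,r^{\alpha_{k}} \;=\; -m\alpha_{k}\,\varepsilon\,r^{d-4+\alpha_{k}} \;\le\; 0,
\end{equation*}
so $\phi_{k}(r):=r^{\alpha_{k}}$ is a supersolution of the radial operator for every $\varepsilon\ge0$ (exact solution at $\varepsilon=0$). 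Since the equation is uniformly elliptic for $\varepsilon>0$, $\bar v_{1}$ is smooth near the origin, and Taylor expansion forces $a_{k}(r)=O(r^{d_{k}})$ there. The elementary inequality $\alpha_{k}<d_{k}$, which holds precisely because $m\ge2>0$, then gives $a_{k}(\eta)=o(\phi_{k}(\eta))$ as $\eta\to0^{+}$. Applying the Sturm--Liouville maximum principle on $[\eta,R]$ to $\pm a_{k}-C\phi_{k}$, with $C:=|a_{k}(R)|/R^{\alpha_{k}}$, both boundary values are nonpositive; letting $\eta\to0^{+}$ yields
\begin{equation*}
|a_{k}(\rho)|\;\le\;(\rho/R)^{\alpha_{k}}\,|a_{k}(R)|,\qquad 0<\rho\le R.
\end{equation*}

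\textbf{Step 3 (Parseval) and main obstacle.} Using orthonormality and $\alpha_{k}\ge\alpha$ for all $d_{k}\ge1$,
\begin{equation*}
\fint_{\partial B_{\rho}'}|w|^{2}\,d\sigma \;=\; \frac{1}{|S^{d-2}|}\sum_{d_{k}\ge1}|a_{k}(\rho)|^{2} \;\le\; \left(\frac{\rho}{R}\right)^{2\alpha}\fint_{\partial B_{R}'}|w|^{2}\,d\sigma,
\end{equation*}
which is the claim after taking square roots. The technical heart of the proof is the uniform-in-$\varepsilon$ comparison in Step 2: the explicit function $\phi_{k}=r^{\alpha_{k}}$ is an exact solution only at $\varepsilon=0$, so one must check that the $\varepsilon$-error in $L_{k}\phi_{k}$ has the \emph{correct sign} to preserve the supersolution property, and simultaneously that $a_{k}/\phi_{k}$ stays bounded (in fact tends to $0$) at the origin so that the maximum principle delivers the characteristic-exponent rate without $\varepsilon$-dependent remainders. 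The simultaneous validity of these two conditions is what pins the exponent in Lemma \ref{lemma001} to exactly $\alpha$ rather than to a larger but $\varepsilon$-deteriorating rate.
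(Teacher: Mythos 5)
Your overall strategy coincides with the paper's: expand in spherical harmonics, show $r^{\alpha_k}$ is a supersolution of each radial operator (your divergence-form computation of the $-m\alpha_k\varepsilon r^{d-4+\alpha_k}$ error is correct and equivalent to the paper's $L_k r^{\alpha_k}\le0$), and recombine by Parseval using $\alpha_k\ge\alpha_1=\alpha$. The gap is in how you close the comparison at the origin. You assert that interior smoothness plus Taylor expansion gives $a_k(r)=O(r^{d_k})$, hence $a_k(\eta)=o(\eta^{\alpha_k})$, so that $\pm a_k-C\phi_k\le0$ at $r=\eta$. But the coefficient $\delta(|y'|)=\varepsilon+|y'|^m$ is \emph{not} smooth at $y'=0'$ unless $m$ is an even integer, and the statement allows any real $m\ge2$. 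Uniform ellipticity for $\varepsilon>0$ only yields finite-order regularity of $\bar v_1$ at the origin (roughly Schauder regularity tied to the H\"older smoothness of $|y'|^m$), so the vanishing order of $a_k$ at $0$ that you can actually justify is capped by a fixed finite number, while $\alpha_k\to\infty$ as $k\to\infty$. For all large $k$ the inequality $|a_k(\eta)|\le C\phi_k(\eta)$ at the inner endpoint is therefore unjustified, and the maximum-principle step on $[\eta,R]$ does not go through as written; the argument is circular if you instead try to prove decay of $a_k$ at $0$ at rate $\alpha_k$ directly.

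The paper sidesteps this entirely: it uses only the boundedness of each mode $V_{k,l}$ near $r=0$ (which follows from $\bar v_1\in L^\infty$) and adds an auxiliary barrier $\tau r^{-c}$ with $c>0$ small, which is also a supersolution ($L_k r^{-c}\le0$ for $k\ge1$) and blows up at the origin, so the comparison function dominates near $0$ without any vanishing information on $V_{k,l}$; after applying the maximum principle one lets $\tau\to0$. Replacing your Taylor-expansion step by this two-barrier comparison (or by any argument using only $L^\infty$ bounds near the origin) repairs the proof; the rest of your argument, including the explicit treatment of the $k=0$ mode and the Parseval step, is fine.
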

\begin{proof}
Observe that $\bar{v}_{1}\in C^{\infty}(B_{R}')$ by using the standard elliptic theory. Without loss of generality, let $\bar{v}_{1}(0')=0$ and $R=1$. Write $y'=(r,\xi)\in(0,1)\times\mathbb{S}^{d-2}$. Then $\bar{v}_{1}$ verifies
\begin{align*}
\partial_{rr}\bar{v}_{1}+\left(\frac{d-2}{r}+\frac{mr^{m-1}}{\varepsilon+r^{m}}\right)\partial_{r}\bar{v}_{1}+\frac{1}{r^{2}}\Delta_{\mathbb{S}^{d-2}}\bar{v}_{1}=0,\quad\mathrm{in}\;B_{1}'\setminus\{0'\}.
\end{align*}
Adopt the following decomposition
\begin{align}\label{FK001}
\bar{v}_{1}(y')=\sum^{\infty}_{k=1}\sum^{N(k)}_{l=1}V_{k,l}(r)Y_{k,l}(\xi),\quad y'\in B_{1}'\setminus\{0'\},
\end{align}
where $\{Y_{k,l}\}_{k,l}$ is an orthonormal basis of $L^{2}(\mathbb{S}^{d-2})$ and every element $Y_{k,l}$ denotes a $k$-th degree spherical harmonics satisfying that
\begin{align*}
-\Delta_{\mathbb{S}^{d-2}}Y_{k,l}=k(k+d-3)Y_{k,l}.
\end{align*}
Therefore, $V_{k,l}(r)\in C^{2}(0,1)$ is determined by
\begin{align*}
V_{k,l}(r)=\int_{\mathbb{S}^{d-2}}\bar{v}_{1}(y')Y_{k,l}(\xi)d\xi,
\end{align*}
and verifies
\begin{align*}
L_{k}V_{k,l}:=\partial_{rr}V_{k,l}(r)+\left(\frac{d-2}{r}+\frac{mr^{m-1}}{\varepsilon+r^{m}}\right)\partial_{r}V_{k,l}(r)-\frac{k(k+d-3)}{r^{2}}V_{k,l}(r)=0,
\end{align*}
for $r\in (0,1)$, $k\in\mathbb{N}$, $l=1,2,...,N(k)$.

For every $c\in\mathbb{R}$, a direct calculation gives
\begin{align*}
L_{k}r^{c}=r^{c-2}\left[c^{2}+\Big(d-3+\frac{mr^{m}}{\varepsilon+r^{m}}\Big)c-k(k+d-3)\right].
\end{align*}
It then follows that for a sufficiently small $c>0$,
\begin{align}\label{W001}
L_{k}r^{-c}\leq0,\quad\mathrm{and}\; L_{k}r^{\alpha_{k}}\leq0,\quad\mathrm{in}\;(0,1),
\end{align}
where $\alpha_{k}$ is given by
\begin{align*}
\alpha_{k}:=\frac{-(d+m-3)+\sqrt{(d+m-3)^{2}+4k(k+d-3)}}{2},\quad\text{for}\;k\in\mathbb{N}.
\end{align*}
Especially when $k=1$, we have $\alpha_{1}=\alpha$. Then we obtain from \eqref{W001} that for every $\tau>0$,
\begin{align*}
L_{k}(\pm V_{k,l}(r)-\tau r^{-c}-|V_{k,l}(1)|r^{\alpha_{k}})\geq0,\quad\mathrm{in}\;(0,1).
\end{align*}
Note that $V_{k,l}$ remains bounded in $(0,1)$ for the sake of $\bar{v}_{1}\in L^{\infty}(B_{1})$. Hence,
\begin{align*}
\pm V_{k,l}(r)-\tau r^{-c}-|V_{k,l}(1)|r^{\alpha_{k}}<0,\quad\mathrm{as}\;r\searrow0,\;\mathrm{or}\;r=1,
\end{align*}
which, together with the maximum principle, yields that
\begin{align*}
|V_{k,l}(r)|\leq\tau r^{-c}+|V_{k,l}(1)|r^{\alpha_{k}},\quad\mathrm{in}\;(0,1).
\end{align*}
Letting $\tau\rightarrow0$, we get
\begin{align*}
|V_{k,l}(r)|\leq|V_{k,l}(1)|r^{\alpha_{k}},\quad\mathrm{for}\;r\in(0,1).
\end{align*}
This, in combination with \eqref{FK001}, reads that
\begin{align*}
\fint_{\partial B_{\rho}}|\bar{v}_{1}|^{2}=&\sum^{\infty}_{k=1}\sum^{N(k)}_{l=1}|V_{k,l}(\rho)|^{2}\leq\rho^{2\alpha}\sum^{\infty}_{k=1}\sum^{N(k)}_{l=1}|V_{k,l}(1)|^{2}=\rho^{2\alpha}\fint_{\partial B_{1}}|\bar{v}_{1}|^{2}.
\end{align*}

\end{proof}

With regard to $\bar{v}_{2}$, we make use of Moser's iteration argument to obtain the following result.
\begin{lemma}\label{lemma002}
For $d\geq 3$, $1+\gamma-m\sigma>0$, let $\bar{v}_{2}$ be a solution of \eqref{de002} with $R=1$. Suppose that $F\in L^{\infty}(B_{1}')$ satisfies $\|F\|_{\varepsilon,\gamma,\sigma,B_{1}'}<\infty$.  Then,
\begin{align*}
\|\bar{v}_{2}\|_{L^{\infty}(B_{1}')}\leq C\|F\|_{\varepsilon,\gamma,\sigma,B_{1}'},
\end{align*}
where $C$ is a positive constant depending only on $d,m,\gamma,\sigma$, but not on $\varepsilon$.
\end{lemma}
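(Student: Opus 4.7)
The plan is to execute a Moser iteration for \eqref{de002} with all constants uniform in $\varepsilon$. Write $M := \|F\|_{\varepsilon,\gamma,\sigma,B_1'}$, so $|F(y')| \leq M |y'|^{\gamma}(\varepsilon+|y'|^{m})^{1-\sigma}$ on $B_1'$.

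First I would derive a Caccioppoli inequality. For $p \geq 2$ and a smooth cutoff $\eta$ with $\eta \equiv 1$ on $B_{r_1}' \subset B_{r_2}' \subset B_1'$ and compact support in $B_{r_2}'$, testing \eqref{de002} with $\eta^{2}|\bar{v}_{2}|^{p-2}\bar{v}_{2}$ and using Young's inequality yields
\begin{align*}
\int_{B_{r_2}'}\delta\,\eta^{2}|\bar{v}_{2}|^{p-2}|\nabla\bar{v}_{2}|^{2} \leq C\int_{B_{r_2}'}\delta|\bar{v}_{2}|^{p}|\nabla\eta|^{2}+C\int_{B_{r_2}'}\frac{|F|^{2}}{\delta}\,\eta^{2}|\bar{v}_{2}|^{p-2}.
\end{align*}
Rewriting the left-hand side via $|\nabla(|\bar v_2|^{p/2}\eta)|^2 \leq C(|\bar v_2|^{p-2}|\nabla\bar v_2|^2\eta^2+|\bar v_2|^{p}|\nabla\eta|^2)$ and applying a weighted Sobolev inequality on $B_1'\subset\mathbb{R}^{d-1}$ adapted to the weight $\delta$ (obtained either from $\delta \geq |y'|^m$ together with a Caffarelli--Kohn--Nirenberg inequality for $|y'|^m$, or by explicit dyadic rescaling on annuli) upgrades an $L^p(\delta\,dy')$ bound on $\bar v_2$ to an $L^{\chi p}(\delta\,dy')$ bound, for some fixed $\chi>1$ depending only on $d,m$.

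Second, I would iterate this gain of integrability starting from $p_0=2$ along a sequence of shrinking radii $r_j \downarrow 1/2$ in the standard Moser fashion, to reach
\begin{align*}
\|\bar{v}_{2}\|_{L^{\infty}(B_{1/2}')}\leq C\,M\left(\int_{B_{1}'}|y'|^{2\gamma}(\varepsilon+|y'|^{m})^{1-2\sigma}\,dy'\right)^{1/2}+C\left(\int_{B_1'}\delta|\bar v_2|^2\,dy'\right)^{1/2}.
\end{align*}
Splitting $B_1' = \{|y'|^{m}\leq\varepsilon\}\cup\{|y'|^{m}>\varepsilon\}$ and integrating radially shows the source integral is finite uniformly in $\varepsilon$ precisely under $1+\gamma-m\sigma>0$: in the outer region the integrand behaves like $|y'|^{2\gamma+m-2m\sigma}$, whose radial integral converges because $2\gamma+m-2m\sigma+d-1>d-3\geq 0$, while the inner region contributes a positive power of $\varepsilon$. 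The $L^2$ piece is controlled by $CM$ via the basic energy identity: test \eqref{de002} with $\bar v_2$, use the zero Dirichlet data and the weighted Poincaré inequality.

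Third, to extend the bound from $B_{1/2}'$ to all of $B_1'$, I would cover $B_1'\setminus B_{1/2}'$ by finitely many balls on which $\delta \asymp 1$; there the equation is uniformly elliptic with $\varepsilon$-independent bounds and standard unweighted Moser iteration applies verbatim, giving the same $L^\infty$ estimate. The main obstacle is producing the weighted Sobolev/Poincaré inequality whose constant does not blow up as $\varepsilon\to 0$, since $\delta$ degenerates in the limit and uniform ellipticity is unavailable; one must use an embedding genuinely adapted to the degeneracy (or rescale on dyadic annuli and sum) and carefully track how $d-1$, $m$, $\gamma$, $\sigma$ interact, with the condition $1+\gamma-m\sigma>0$ emerging as the sharp convergence threshold for the radial source integral above.
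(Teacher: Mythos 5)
Your route is genuinely different from the paper's, and its central ingredient is left unproved. The paper never works with the degenerate operator directly: it divides the equation by $\delta$, rewriting \eqref{de002} as $\Delta \bar v_2 + m r^{m-1}\delta^{-1}\partial_r\bar v_2 = \partial_i(F_i\delta^{-1}) + mF_iy_i|y'|^{m-2}\delta^{-2}$, observes that the drift term has a favorable sign when tested against $|\bar v_2|^{p-2}\bar v_2$ (this uses $d\ge 3$, since $\partial_r\big(mr^{d+m-3}\delta^{-1}\big)\ge 0$), and then runs a global Moser iteration with the \emph{standard} Sobolev inequality in $\mathbb{R}^{d-1}$ and no cutoffs (the zero Dirichlet data make the test functions admissible up to $\partial B_1'$), with the sources $r^{\gamma-m\sigma}$ and $r^{\gamma-m\sigma-1}$ placed in $L^{\frac{d-1}{2}+\tau}$, which is exactly where $1+\gamma-m\sigma>0$ enters. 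You instead keep the weight $\delta=\varepsilon+|y'|^m$ and need a Sobolev/Poincar\'e inequality with gain, with the same weight on both sides and with constant uniform in $\varepsilon$. You flag this yourself as the main obstacle, but the two fixes you sketch do not close it: the inequality cannot be deduced from $\delta\ge |y'|^m$ plus a Caffarelli--Kohn--Nirenberg inequality, because near the origin $\delta\approx\varepsilon\gg|y'|^m$, so the pointwise comparison helps on the gradient side but goes the wrong way on the function side; one genuinely needs a patching/doubling-measure argument (or a dyadic-annuli summation with a telescoping/Poincar\'e step), and since this uniform weighted embedding is the crux of the lemma, asserting it is a real gap rather than a routine detail.

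A second, smaller problem is your accounting of where the hypothesis $1+\gamma-m\sigma>0$ is used. The integral $\int_{B_1'}|y'|^{2\gamma}(\varepsilon+|y'|^m)^{1-2\sigma}\,dy'$ that you display converges under the much weaker condition $2\gamma+m-2m\sigma+d-1>0$, so it is not ``precisely'' the threshold. In your weighted scheme the sharp condition should instead come from requiring the divergence-form source density $|F|/\delta\lesssim |y'|^{\gamma-m\sigma}$ to lie in $L^{q}$ with respect to the measure $\delta\,dy'$ for some $q$ exceeding the effective dimension $d-1+m$ (equivalently, the analogous $q>\frac{d-1+m}{2}$ requirement for the zeroth-order piece), which does reduce to $1+\gamma-m\sigma>0$ --- but this bookkeeping, together with tracking the $p$-dependence of the weighted embedding constants along the iteration, is exactly the part of the argument you have not carried out. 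By contrast, the paper's division-by-$\delta$ trick avoids every weighted functional inequality and makes the exponent arithmetic one line; if you want to salvage your approach you must either prove the uniform weighted Sobolev inequality for $\varepsilon+|y'|^m$ or switch to the paper's reduction.
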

\begin{proof}
For simplicity, let $\|F\|_{\varepsilon,\gamma,\sigma,B_{1}'}=1$. Write $r=|y'|$. In view of \eqref{de002}, we see that $\bar{v}_{2}$ satisfies
\begin{align}\label{AWA001}
\Delta \bar{v}_{2}+mr^{m-1}\delta^{-1}\partial_{r}\bar{v}_{2}=\partial_{i}(F_{i}\delta^{-1})+mF_{i}y_{i}|y'|^{m-2}\delta^{-2},\quad\mathrm{in}\;B_{1}'.
\end{align}
From \eqref{QL002}, we have
\begin{align*}
|F_{i}\delta^{-1}|\leq r^{\gamma-m\sigma},\quad |F_{i}y_{i}|y'|^{m-2}\delta^{-2}|\leq r^{\gamma-m\sigma-1}.
\end{align*}
Multiplying equation \eqref{AWA001} by $-|\bar{v}_{2}|^{p-2}\bar{v}_{2}$ with $p\geq2$, it follows from integration by parts that
\begin{align*}
&(p-1)\int_{B_{1}'}|\nabla\bar{v}_{2}|^{2}|\bar{v}_{2}|^{p-2}\notag\\
&\leq C(p-1)\int_{B'_{1}}|\nabla\bar{v}_{2}||\bar{v}_{2}|^{p-2}r^{\gamma-m\sigma}+C\int_{B_{1}'}|\bar{v}_{2}|^{p-1}r^{\gamma-m\sigma-1},
\end{align*}
where we utilized the fact that
\begin{align*}
\int_{B'_{1}}mr^{m-1}\delta^{-1}\partial_{r}\bar{v}_{2}(|\bar{v}_{2}|^{p-2}\bar{v}_{2})=&\frac{1}{p}\int_{\mathbb{S}^{d-2}}\int^{1}_{0}mr^{d+m-3}\delta^{-1}\partial_{r}|\bar{v}_{2}|^{p}drd\theta\notag\\
=&-\frac{1}{p}\int_{\mathbb{S}^{d-2}}\int_{0}^{1}\partial_{r}(mr^{d+m-3}\delta^{-1})|\bar{v}_{2}|^{p}drd\theta\leq0.
\end{align*}
Then in view of $1+\gamma-m\sigma>0$, it follows from H\"{o}lder's inequality that
\begin{align*}
&(p-1)\int_{B_{1}'}|\nabla\bar{v}_{2}|^{2}|\bar{v}_{2}|^{p-2}\notag\\
&\leq C(p-1)\||\nabla\bar{v}_{2}||\bar{v}_{2}|^{\frac{p-2}{2}}\|_{L^{2}(B_{1}')}\|\bar{v}_{2}^{p-2}\|^{1/2}_{L^{\frac{d-1+2\tau}{d-3+2\tau}}(B_{1}')}\|r^{2(\gamma-m\sigma)}\|_{L^{\frac{d-1}{2}+\tau}(B'_{1})}^{1/2}\notag\\
&\quad\,+C\|\bar{v}_{2}^{p-1}\|_{L^{\frac{d-1+2\tau}{d-3+2\tau}}(B_{1}')}\|r^{\gamma-m\sigma-1}\|_{L^{\frac{d-1}{2}+\tau}(B_{1}')}\notag\\
&\leq C(p-1)\||\nabla\bar{v}_{2}||\bar{v}_{2}|^{\frac{p-2}{2}}\|_{L^{2}(B_{1}')}\|\bar{v}_{2}^{p-2}\|^{1/2}_{L^{\frac{d-1+2\tau}{d-3+2\tau}}(B_{1}')}+C\|\bar{v}_{2}^{p-1}\|_{L^{\frac{d-1+2\tau}{d-3+2\tau}}(B_{1}')},
\end{align*}
where $\tau>0$ is a sufficiently small constant such that
\begin{align*}
&\|r^{2(\gamma-m\sigma)}\|_{L^{\frac{d-1}{2}+\tau}(B'_{1})}+\|r^{\gamma-m\sigma-1}\|_{L^{\frac{d-1}{2}+\tau}(B_{1}')}\leq C.
\end{align*}
Then using H\"{o}lder's inequality and Young's inequality, we get
\begin{align}\label{QAZ001}
\frac{2(p-1)}{p^{2}}\int_{B'_{1}}\big|\nabla|\bar{v}_{2}|^{\frac{p}{2}}\big|^{2}=&\frac{p-1}{2}\int_{B_{1}'}|\nabla\bar{v}_{2}|^{2}|\bar{v}_{2}|^{p-2}\notag\\
\leq&\max_{1\leq i\leq2}Cp^{i-1}\|\bar{v}_{2}\|^{p-i}_{L^{\frac{(d-1+2\tau)p}{d-3+2\tau}}(B_{1}')}.
\end{align}
In particular, if we pick $p=2$ in \eqref{QAZ001}, then we deduce from the Sobolev-Poincar\'{e} inequality and Young's inequality that
\begin{align}\label{E001}
\|\bar{v}_{2}\|_{L^{\frac{2(d-1+2\tau)}{d-3+2\tau}}(B_{1}')}\leq C.
\end{align}
Utilizing the Sobolev-Poincar\'{e} inequality and Young's inequality again for \eqref{QAZ001} with $p\geq2$, we obtain
\begin{align}\label{E002}
\|\bar{v}_{2}\|_{L^{tp}(B_{1}')}\leq&\max_{1\leq i\leq2}(Cp^{i})^{1/p}\left(\frac{p-i}{p}\|\bar{v}_{2}\|_{L^{\frac{(d-1+2\tau)p}{d-3+2\tau}}(B_{1}')}+\frac{i}{p}\right)\notag\\
\leq& (Cp^{2})^{1/p}\left(\|\bar{v}_{2}\|_{L^{\frac{(d-1+2\tau)p}{d-3+2\tau}}(B_{1}')}+\frac{2}{p}\right),
\end{align}
where $t:=t(d)$ is given by
\begin{align*}
\begin{cases}
t>\frac{d-1+2\tau}{d-3+2\tau},&d=3,\\
t=\frac{d-1}{d-3},&d>3.
\end{cases}
\end{align*}
Set
\begin{align*}
p_{k}=2\left(\frac{(d-3+2\tau)t}{d-1+2\tau}\right)^{k}\frac{d-1+2\tau}{d-3+2\tau},\quad k\geq0,\,d\geq3.
\end{align*}
Hence by iteration with \eqref{E001}--\eqref{E002}, we get
\begin{align*}
\|\bar{v}_{2}\|_{L^{p_{k}}(B_{1}')}\leq&\prod^{k-1}_{i=0}(Cp_{i}^{2})^{1/p_{i}}\|\bar{v}_{2}\|_{L^{p_{0}}(B_{1}')}+\sum^{k-1}_{i=0}\prod^{k-1-i}_{j=0}(Cp_{k-1-j}^{2})^{1/p_{k-1-j}}\frac{2}{p_{i}}\notag\\
\leq&C\|\bar{v}_{2}\|_{L^{\frac{2(d-1+2\tau)}{d-3+2\tau}}(B_{1}')}+C\leq C,
\end{align*}
where $C=C(d,m,\gamma,\sigma)$ is independent of $k$. Then Lemma \ref{lemma002} is proved by letting $k\rightarrow\infty$.

\end{proof}

Combining Lemma \ref{lemma001} and \ref{lemma002}, we obtain
\begin{prop}\label{prop001}
For $d\geq 3$, $\sigma\geq0$, $1+\gamma-m\sigma>0$, $1+\gamma-m\sigma\neq\alpha$, let $\bar{v}$ be a solution of \eqref{ZK003} with $\|F\|_{\varepsilon,\gamma,\sigma,B_{R_{0}}'}<\infty$.  Then for any $R\in(0,R_{0})$,
\begin{align*}
\left(\fint_{\partial B_{R}'}|\bar{v}-\bar{v}(0')|^{2}\right)^{1/2}\leq C\|F\|_{\varepsilon,\gamma,\sigma,B_{R_{0}}'}R^{\tilde{\alpha}},
\end{align*}
where $\tilde{\alpha}:=\min\{\alpha,1+\gamma-m\sigma\}$ with $\alpha$ given by \eqref{degree}.

\end{prop}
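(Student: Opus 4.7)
The plan is to run a Campanato-type iteration combining the decay estimate of Lemma \ref{lemma001} for the homogeneous part $\bar v_1$ with the $L^{\infty}$ estimate of Lemma \ref{lemma002} for the inhomogeneous part $\bar v_2$ along a dyadic sequence of balls $\{B_{\theta^k R_0}'\}_{k\ge 0}$. The exponent $\tilde\alpha$ is the smaller of the two rates, and the excluded case $1+\gamma-m\sigma=\alpha$ is precisely the resonant value at which the geometric summation would produce an extraneous logarithm.

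The first step is to recast Lemma \ref{lemma002} at an arbitrary radius $R$. Under the rescaling $y'=Rz'$ and $\tilde\varepsilon=\varepsilon R^{-m}$, the equation for $\bar v_2$ transforms into one of the same form on $B_1'$, with weight $\tilde\delta=\tilde\varepsilon+|z'|^m$ and forcing $\tilde F(z')=R^{1-m}F(Rz')$. A direct computation then shows
\[
\|\tilde F\|_{\tilde\varepsilon,\gamma,\sigma,B_1'}=R^{1+\gamma-m\sigma}\|F\|_{\varepsilon,\gamma,\sigma,B_R'}.
\]
Since the constant in Lemma \ref{lemma002} is independent of $\varepsilon$, applying the lemma in the $z$-variables yields
\[
\|\bar v_2\|_{L^{\infty}(B_R')}\le CR^{1+\gamma-m\sigma}\|F\|_{\varepsilon,\gamma,\sigma,B_{R_0}'}.
\]

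With that in hand, set $\Phi(r):=\bigl(\fint_{\partial B_r'}|\bar v-\bar v(0')|^2\bigr)^{1/2}$ and fix some $\theta\in(0,1)$. Writing $\bar v=\bar v_1+\bar v_2$ on $B_R'$ via \eqref{ADAD001}, the boundary condition $\bar v_1=\bar v$ on $\partial B_R'$ together with $\bar v(0')-\bar v_1(0')=\bar v_2(0')$ and Lemma \ref{lemma001} give, after the triangle inequality,
\[
\Phi(\theta R)\le\theta^{\alpha}\Phi(R)+C\|\bar v_2\|_{L^{\infty}(B_R')}\le\theta^{\alpha}\Phi(R)+CR^{1+\gamma-m\sigma}\|F\|_{\varepsilon,\gamma,\sigma,B_{R_0}'}.
\]
Iterating this recurrence along $R_k=\theta^kR_0$ and summing the resulting geometric series in the two regimes $\alpha<1+\gamma-m\sigma$ and $\alpha>1+\gamma-m\sigma$ (the dichotomy is clean thanks to the hypothesis $1+\gamma-m\sigma\neq\alpha$) produces $\Phi(R)\le C\|F\|_{\varepsilon,\gamma,\sigma,B_{R_0}'}R^{\tilde\alpha}$, where the base quantity $\Phi(R_0)$ is absorbed into the constant using the normalization $\|v\|_{L^{\infty}}\le1$ inherited from \eqref{problem006}.

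The proposition is essentially bookkeeping once Lemmas \ref{lemma001} and \ref{lemma002} are in place, but there is one real check to make: the rescaling in the first step sends $\varepsilon\mapsto\varepsilon R^{-m}$, which may be arbitrarily large at small $R$, so it is crucial that the constant in Lemma \ref{lemma002} is genuinely uniform in the weight parameter. Beyond that, the only qualitative subtlety is the resonant case, which is explicitly ruled out by hypothesis precisely so that the geometric series in the iteration does not degenerate.
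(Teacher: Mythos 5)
Your proposal is correct and follows essentially the same route as the paper's proof: the same rescaling $\bar v_2(Ry')$ with $\varepsilon\mapsto R^{-m}\varepsilon$ to upgrade Lemma \ref{lemma002} to the bound $\|\bar v_2\|_{L^{\infty}(B_R')}\leq CR^{1+\gamma-m\sigma}\|F\|_{\varepsilon,\gamma,\sigma,B_{R_0}'}$, the same one-step decay inequality obtained from the decomposition \eqref{ADAD001}, the boundary agreement $\bar v_1=\bar v$ on $\partial B_R'$ and Lemma \ref{lemma001}, and the same dyadic iteration whose geometric series converges precisely because $1+\gamma-m\sigma\neq\alpha$. The only differences are cosmetic (a general ratio $\theta$ instead of the paper's $1/2$), and your remark about the $\varepsilon$-uniformity of the constant in Lemma \ref{lemma002} correctly identifies the point that makes the rescaling legitimate.
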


\begin{proof}
Without loss of generality, set $\bar{v}(0')=0$ and $\|F\|_{\varepsilon,\gamma,\sigma,B_{R_{0}}'}=1$. For $0<\rho\leq R\leq R_{0}$, denote
\begin{align*}
\omega(\rho):=\bigg(\fint_{\partial B_{\rho}'}|\bar{v}|^{2}\bigg)^{\frac{1}{2}}.
\end{align*}
Write $\tilde{v}_{2}(y'):=\bar{v}_{2}(Ry')$. Then in light of \eqref{de002}, we see that $\tilde{v}_{2}$ verifies
\begin{align*}
\mathrm{div}\big((R^{-m}\varepsilon+|y'|^{m})\nabla\tilde{v}_{2}\big)=\mathrm{div}\tilde{F},\quad\mathrm{in}\;B_{1}',
\end{align*}
where $\tilde{F}(y'):=R^{-(m-1)}F(Ry')$ satisfies
\begin{align*}
\|\tilde{F}\|_{R^{-m}\varepsilon,\gamma,\sigma,B_{1}'}=R^{1+\gamma-m\sigma}\|F\|_{\varepsilon,\gamma,\sigma,B_{R}'}.
\end{align*}
Then using Lemma \ref{lemma002} for $\tilde{v}_{2}$ with $R^{-m}\varepsilon$ substituting for $\varepsilon$, we get
\begin{align}\label{FNM001}
\|\bar{v}_{2}\|_{L^{\infty}(B_{R}')}\leq CR^{1+\gamma-m\sigma}.
\end{align}
Recalling decomposition \eqref{ADAD001}, it follows from Lemma \ref{lemma001} and \eqref{FNM001} that
\begin{align}\label{GAZ001}
\omega(\rho)\leq&\bigg(\fint_{\partial B_{\rho}'}|\bar{v}_{1}-\bar{v}_{1}(0')|^{2}\bigg)^{\frac{1}{2}}+\bigg(\fint_{\partial B_{\rho}'}|\bar{v}_{2}-\bar{v}_{2}(0')|^{2}\bigg)^{\frac{1}{2}}\notag\\
\leq&\left(\frac{\rho}{R}\right)^{\alpha}\bigg(\fint_{\partial B_{R}'}|\bar{v}_{1}|^{2}\bigg)^{\frac{1}{2}}+\left(\frac{\rho}{R}\right)^{\alpha}|\bar{v}_{1}(0')|+2\|\bar{v}_{2}\|_{L^{\infty}(B_{R}')}\notag\\
\leq&\left(\frac{\rho}{R}\right)^{\alpha}\omega(R)+CR^{1+\gamma-m\sigma},
\end{align}
where we also used the fact that $\bar{v}=\bar{v}_{1}$ on $\partial B_{R}'$ and $|\bar{v}_{1}(0')|=|\bar{v}_{2}(0')|$ in virtue of $\bar{v}(0')=\bar{v}_{1}(0')+\bar{v}_{2}(0')=0$. For $i=0,...,k-1,$ $k$ is a positive integer, let $\rho=2^{-i-1}R_{0}$ and $R=2^{-i}R_{0}$ in \eqref{GAZ001}. Since $1+\gamma-m\sigma\neq\alpha$, it then follows from $k$ iterations that
\begin{align*}
\omega(2^{-k}R_{0})\leq&2^{-k\alpha}\omega(R_{0})+C\sum^{k}_{i=1}2^{-(k-i)\alpha}(2^{1-i}R_{0})^{1+\gamma-m\sigma}\notag\\
\leq&2^{-k\alpha}\omega(R_{0})+C2^{-k\alpha}R_{0}^{1+\gamma-m\sigma}\frac{1-2^{k(\alpha-1-\gamma+m\sigma)}}{1-2^{\alpha-1-\gamma+m\sigma}}\notag\\
\leq&2^{-k\tilde{\alpha}}\big(\omega(R_{0})+CR_{0}^{1+\gamma-m\sigma}\big).
\end{align*}
For every $\rho\in(0,R_{0})$, there exists some integer $k$ such that $\rho\in(2^{-k-1}R_{0},2^{-k}R_{0}]$. Hence we have
\begin{align*}
\omega(\rho)\leq C\rho^{\tilde{\alpha}},\quad\mathrm{for}\;\mathrm{any}\;\rho\in(0,R_{0}).
\end{align*}
The proof is complete.

\end{proof}

We are now ready to prove Theorem \ref{thm001}.
\begin{proof}[Proof of Theorem \ref{thm001}]
To begin with, we point out again that by making use of the change of variables in \eqref{SCALING} for every line segment in the thin gap $\Omega_{R_{0}/2}$, we can achieve a united proof of Theorem \ref{thm001}. That is, we don't need to divide into two cases to prove Theorem \ref{thm001} any more, which simplifies the corresponding proof procedure in \cite{DLY2021}.

Suppose that $\lambda=1$, $u(0)=0$ and $\|u\|_{L^{\infty}(\Omega_{R_{0}})}=1$ without loss of generality. Let $v$ and $\bar{v}$ be defined by \eqref{ZKM001} and \eqref{KL01}--\eqref{ZK003}, respectively. From \eqref{QL001}, we obtain
\begin{align*}
\|F\|_{\varepsilon,\gamma,\sigma_{0},B_{R_{0}}'}<\infty,\quad\mathrm{with}\;\sigma_{0}=\frac{1}{m},
\end{align*}
decreasing $\gamma$ if necessary. Using \eqref{AZ005}, we have
\begin{align}\label{LZMWN001A}
|v(y',y_{d})-\bar{v}(y')|\leq2\delta_{0}\max_{y_{d}\in(-\delta_{0},\delta_{0})}|\partial_{d}v(y',y_{d})|\leq C\delta,\quad\mathrm{in}\;Q_{R_{0},\delta_{0}}.
\end{align}
From Proposition \ref{prop001}, we have
\begin{align*}
\int_{B'_{2c_{0}\delta^{1/m}_{0}}(x_{0}')}|\bar{v}-\bar{v}(0')|^{2}\leq \int_{B'_{|x_{0}'|+2c_{0}\delta^{1/m}_{0}}(0')}|\bar{v}-\bar{v}(0')|^{2}\leq C\delta^{\frac{2\tilde{\alpha}+n-1}{m}}_{0}.
\end{align*}
This, in combination with \eqref{LZMWN001A}, yields that
\begin{align*}
&\fint_{Q_{2c_{0}\delta_{0}^{1/m},\delta_{0}}(x_{0}')}|v-\bar{v}(0')|^{2}dy\notag\\
&\leq\fint_{Q_{2c_{0}\delta_{0}^{1/m},\delta_{0}}(x_{0}')}2\big(|v-\bar{v}|^{2}+|\bar{v}-\bar{v}(0')|^{2})dy\leq C\delta_{0}^{\frac{2\tilde{\alpha}}{m}},
\end{align*}
where $c_{0}:=2^{-(m+1)}m^{-1}$. Let
\begin{align*}
\tilde{v}(y)=&v(\delta_{0}^{1/m}y'+x_{0}',\delta_{0}^{1/m}y_{d})-\bar{v}(0'),\notag\\
\tilde{a}_{ij}(y)=&\delta_{0}^{-1}a_{ij}(\delta_{0}^{1/m}y'+x_{0}',\delta_{0}^{1/m}y_{d}).
\end{align*}
Since $Q_{2c_{0}\delta_{0}^{1/m},\delta_{0}}(x_{0}')\subset Q_{2R_{0},\delta_{0}}$ for $x_{0}'\in B_{R_{0}/2}'$, then $\tilde{v}$ solves
\begin{align*}
\begin{cases}
-\partial_{i}(\tilde{a}_{ij}(y)\partial_{j}\tilde{v}(y))=0,&\mathrm{in}\; Q_{2c_{0},\delta_{0}^{1-1/m}},\\
\tilde{a}_{dj}(y)\partial_{j}\tilde{v}(y)=0,&\mathrm{on}\;\{y_{d}=\pm\delta_{0}^{1-1/m}\}.
\end{cases}
\end{align*}
Observe that for $x=(x',x_{d})\in\Omega_{s}(x_{0}')$, $0<s\leq 2c_{0}\delta_{0}^{1/m}$, $c_{0}=2^{-(m+1)}m^{-1}$, we deduce
\begin{align*}
|\delta(x')-\delta(x_{0}')|=&||x'|^{m}-|x_{0}'|^{m}|\leq m|x'_{\theta}|^{m-1}|x'-x_{0}'|\notag\\
\leq&2^{m-2}ms(s^{m-1}+|x_{0}'|^{m-1})\leq\frac{\delta(x_{0}')}{2},
\end{align*}
where $x_{\theta}'$ is some point between $x_{0}'$ and $x'$. Then, we have
\begin{align}\label{QWN001}
\frac{1}{2}\delta(x_{0}')\leq\delta(x')\leq\frac{3}{2}\delta(x_{0}'),\quad\mathrm{in}\;\Omega_{s}(x_{0}').
\end{align}
Remark that the result in \eqref{QWN001} actually gives a precise characterization for the equivalence of the length of each line segment in the small narrow region $\Omega_{s}(x_{0}')$, which is not presented in previous work \cite{DLY2021}. Using \eqref{QWN001}, we obtain that the coefficient matrix $\tilde{a}:=(\tilde{a}_{ij})$ satisfies
\begin{align*}
\frac{I}{C}\leq\tilde{a}\leq CI,\quad\mathrm{and}\;\|\tilde{a}\|_{C^{\mu}(Q_{2c_{0},\delta_{0}^{1-1/m}})}\leq C,\quad\mathrm{for}\;\mathrm{any}\;\mu\in(0,1].
\end{align*}
For any integer $l$, denote
\begin{align*}
S_{l}:=\{y\in\mathbb{R}^{d}\,|\,|y'|<2c_{0},\;(2l-1)\delta_{0}^{1-1/m}<y_{d}<(2l+1)\delta_{0}^{1-1/m}\},
\end{align*}
and
\begin{align*}
S:=\{y\in\mathbb{R}^{d}\,|\,|y'|<2c_{0},\,|y_{d}|<2c_{0}\}.
\end{align*}
In particular, $S_{0}=Q_{2c_{0},\delta_{0}^{1-1/m}}$. Introduce a new function as follows:
\begin{align*}
\hat{v}(y):=\tilde{v}(y',(-1)^{l}(y_{d}-2l\delta_{0}^{1-1/m})),\quad\mathrm{in}\;S_{l},\,l\in\mathbb{Z},
\end{align*}
which is generated by performing even extension of $\tilde{v}$ with respect to $y_{d}=\delta_{0}^{1-1/m}$ and then the periodic extension with the period $4\delta_{0}^{1-1/m}$. The corresponding coefficients become that for $k=1,...,d-1$ and any $l\in\mathbb{Z}$,
\begin{align*}
\hat{a}_{dk}(y)=\hat{a}_{kd}(y):=(-1)^{l}\tilde{a}_{kd}(y',(-1)^{l}(y_{d}-2l\delta_{0}^{1-1/m})),\quad\mathrm{in}\;S_{l},
\end{align*}
and
\begin{align*}
\hat{a}_{ij}(y):=\tilde{a}_{ij}(y',(-1)^{l}(y_{d}-2l\delta_{0}^{1-1/m})),\quad\mathrm{in}\;S_{l},
\end{align*}
for other indices. Therefore, $\hat{v}$ and $\hat{a}_{ij}$ are defined in $Q_{2,\infty}$. From the conormal boundary conditions, we know that $\hat{v}$ verifies
\begin{align*}
\partial_{i}(\hat{a}_{ij}\partial_{j}\hat{v})=0,\quad\mathrm{in}\;S.
\end{align*}
Applying Proposition 4.1 of \cite{LN2003} and Lemma 2.1 of \cite{LY202102}, we get
\begin{align*}
\|\nabla\hat{v}\|_{L^{\infty}(\frac{1}{2}S)}\leq C\|\hat{v}\|_{L^{2}(S)}\leq C\delta_{0}^{\frac{\tilde{\alpha}}{m}}.
\end{align*}
Then back to $u$, we obtain that for $x_{0}=(x_{0}',x_{d})\in\Omega_{R_{0}/2}$,
\begin{align*}
|\nabla u(x_{0})|\leq\|\nabla u\|_{L^{\infty}(\Omega_{c_{0}\delta^{1/m}}(x_{0}'))}\leq C\delta_{0}^{\frac{\tilde{\alpha}-1}{m}}=C(\varepsilon+|x_{0}'|^{m})^{\frac{\tilde{\alpha}-1}{m}}.
\end{align*}
Then we improve the previous upper bound $|\nabla u(x)|\leq C(\varepsilon+|x'|^{m})^{-\sigma_{0}}$ to be $|\nabla u(x)|\leq C(\varepsilon+|x'|^{m})^{\frac{\tilde{\alpha}-1}{m}},$ where $\frac{\tilde{\alpha}-1}{m}=\min\{\frac{\alpha-1}{m},-\sigma_{0}+\frac{\gamma}{m}\}.$ If $1+\gamma-m\sigma_{0}>\alpha$, then the proof is finished. Otherwise, if $1+\gamma-m\sigma_{0}<\alpha$, then pick $\sigma_{1}=\sigma_{0}-\frac{\gamma}{m}$ and repeat the above argument. It may need to decrease $\gamma$ if necessary such that $\frac{\alpha-1}{m}\neq-\sigma_{0}+k\frac{\gamma}{m}$ for any $k\geq1$. By repeatedly using the argument for finite times, we complete the proof of Theorem \ref{thm001}.

\end{proof}

\section{optimal lower bound on the gradient}\label{SEC03}

Denote
\begin{align}\label{lam}
\lambda_{0}:=\frac{2}{m}.
\end{align}
We start by proving the following lemma for the purpose of establishing the optimal lower bound on the gradient.
\begin{lemma}\label{lemma006}
For $\varepsilon>0$, there exists a unique solution $g\in L^{\infty}((0,1))\cap C^{\infty}((0,1])$
\begin{align}\label{AMR001}
Lg:=\partial_{rr}g(r)+\left(\frac{d-2}{r}+\frac{m\lambda_{0}r^{m-1}}{\varepsilon+\lambda_{0}r^{m}}\right)\partial_{r}g(r)-\frac{d-2}{r^{2}}g(r)=0,\quad0<r<1,
\end{align}
such that $g(1)=1$. Furthermore, $g\in C([0,1])$ increases strictly with $g(0)=0$, satisfying that for $\beta\geq\frac{2\alpha^{2}+\alpha(d+m-3)}{2\alpha+d-3}$,
\begin{align}\label{DAM001}
\min\{r,\lambda_{0}^{\frac{\beta-\alpha}{m}}r^{\beta}(\varepsilon+\lambda_{0}r^{m})^{\frac{\alpha-\beta}{m}}\}<g(r)<r^{\alpha},\quad\mathrm{in}\;(0,1),
\end{align}
and
\begin{align}\label{DAM002}
g(r)<C_{0}(\varepsilon)r,\quad\mathrm{in}\;(0,r_{0}(\varepsilon)),
\end{align}
where $\alpha$ is given by \eqref{degree}, $\lambda_{0}$ is defined in \eqref{lam}, and
\begin{align}\label{WZQ001}
r_{0}(\varepsilon)=\left(\frac{a_{0}(b_{0}-1)(d+b_{0}-2)}{m\lambda_{0}(1+a_{0}b_{0})}\varepsilon\right)^{\frac{1}{m+1-b_{0}}},\quad C_{0}(\varepsilon)=\frac{(r_{0}(\varepsilon))^{\alpha-1}}{1-a_{0}(r_{0}(\varepsilon))^{b_{0}-1}},
\end{align}
for any fixed constants $a_{0}>0$ and $1<b_{0}<m+1$.

\end{lemma}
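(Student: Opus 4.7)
The plan is to establish each assertion by constructing an explicit barrier and invoking the maximum principle for $L$, whose zeroth-order coefficient $-\tfrac{d-2}{r^2}$ has the right sign for comparison arguments. Existence, uniqueness, continuity at $0$, and strict monotonicity all follow from a Frobenius analysis at the regular singular point $r=0$ combined with an elementary sign argument. The indicial equation $(c-1)(c+d-2)=0$ shows that bounded solutions near $0$ form a one-dimensional subspace (behaving like $r$), so $g(1)=1$ pins down a unique solution with $g(0)=0$. For sign and monotonicity, at any interior critical point $r^\ast$ the equation $Lg=0$ forces $g''(r^\ast)=\frac{d-2}{(r^\ast)^2}g(r^\ast)$, so $g(r^\ast)<0$ at an interior minimum contradicts $g''\ge 0$ at that point, and a critical point with $g(r^\ast)>0$ must be a strict local minimum; together with the boundary values this rules out any interior critical point, giving strict monotonicity and positivity.

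For the upper bound $g<r^\alpha$, the definition of $\alpha$ yields $Lr^\alpha = -\frac{\alpha m\varepsilon r^{\alpha-2}}{\varepsilon+\lambda_0 r^m}<0$, so $r^\alpha$ is a strict supersolution matching $g$ at the endpoints, and the maximum principle applied to $r^\alpha-g$ gives the strict inequality on $(0,1)$.

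For the lower bound I will actually prove the stronger statement $g>\max\{r,\phi\}$ with $\phi(r)=\lambda_0^{(\beta-\alpha)/m}r^\beta(\varepsilon+\lambda_0 r^m)^{(\alpha-\beta)/m}$. Computing $Lr=\frac{m\lambda_0 r^{m-1}}{\varepsilon+\lambda_0 r^m}>0$ and observing that $r$ and $g$ match at both endpoints gives $g>r$. For $\phi$, a mechanical expansion using $\phi'=\lambda_0^{(\beta-\alpha)/m}r^{\beta-1}(\varepsilon+\lambda_0 r^m)^{(\alpha-\beta)/m-1}(\beta\varepsilon+\alpha\lambda_0 r^m)$ organizes $L\phi$ as a positive factor times
\begin{align*}
(\beta-1)(\beta+d-2)\,\varepsilon^2 + [\beta(2\alpha+d-3)-(\alpha^2+d-2)]\,\varepsilon\lambda_0 r^m + [\alpha^2+(d+m-3)\alpha-(d-2)](\lambda_0 r^m)^2.
\end{align*}
The $(\lambda_0 r^m)^2$ coefficient vanishes by the definition of $\alpha$; the cross-term coefficient is non-negative exactly under the stated hypothesis $\beta\ge\tfrac{2\alpha^2+\alpha(d+m-3)}{2\alpha+d-3}=\tfrac{\alpha^2+d-2}{2\alpha+d-3}$; and this same hypothesis combined with $(\alpha-1)^2\ge 0$ forces $\beta\ge 1$, making the $\varepsilon^2$ coefficient non-negative. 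Thus $L\phi\ge 0$, and since $\phi(0)=0=g(0)$ while $\phi(1)=(1+\varepsilon/\lambda_0)^{(\alpha-\beta)/m}<1=g(1)$, the maximum principle applied to $g-\phi$ yields $g>\phi$.

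For the refined bound $g<C_0(\varepsilon)r$ on $(0,r_0(\varepsilon))$, I use the supersolution $\psi(r)=C_0(r-a_0 r^{b_0})$. Using $b_0^2+(d-3)b_0-(d-2)=(b_0-1)(b_0+d-2)$, the condition $L\psi\le 0$ reduces to
\begin{align*}
m\lambda_0\le a_0(b_0-1)(b_0+d-2)\,\varepsilon\, r^{b_0-m-1}+a_0\lambda_0[(b_0-1)(b_0+d-2)+b_0 m]\,r^{b_0-1},
\end{align*}
and the choice of $r_0(\varepsilon)$ in \eqref{WZQ001} is arranged precisely so that the first term alone equals $m\lambda_0(1+a_0 b_0)$ at $r=r_0$; since $b_0<m+1$ makes $r^{b_0-m-1}$ decreasing, the inequality then holds throughout $(0,r_0]$. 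The choice of $C_0(\varepsilon)$ is arranged so that $\psi(r_0)=C_0 r_0(1-a_0 r_0^{b_0-1})=r_0^\alpha>g(r_0)$ by the upper bound already established, while $\psi(0)=0=g(0)$. A maximum-principle comparison on $[0,r_0]$ then gives $g\le\psi<C_0 r$ in the interior. The main obstacle will be the algebraic expansion of $L\phi$ and recognizing the stated $\beta$-threshold as exactly the condition for the cross-term coefficient to be non-negative; everything else is standard maximum-principle bookkeeping anchored on the positivity of $(d-2)/r^2$.
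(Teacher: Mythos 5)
Your barrier computations are correct and coincide with the paper's: the same comparison functions are used there, namely $r$ and $r^{\alpha}$ (with $Lr>0$, $Lr^{\alpha}<0$), the subsolution $\lambda_{0}^{(\beta-\alpha)/m}r^{\beta}(\varepsilon+\lambda_{0}r^{m})^{(\alpha-\beta)/m}$, and the supersolution $C_{0}(\varepsilon)(r-a_{0}r^{b_{0}})$ with the constants \eqref{WZQ001}; your expansion of $L\phi$ is right (the $(\lambda_{0}r^{m})^{2}$-coefficient vanishes by the definition of $\alpha$, the cross-term is nonnegative exactly at the stated threshold, which indeed equals $\frac{\alpha^{2}+d-2}{2\alpha+d-3}$ and exceeds $1$ by $(\alpha-1)^{2}\geq0$), and your verification of $L\psi\leq0$ on $(0,r_{0}(\varepsilon)]$ is the computation the paper only sketches. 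The critical-point sign argument for strict monotonicity is also the paper's argument.

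Where you genuinely diverge is existence, uniqueness, and the behavior at $r=0$, and there your appeal to Frobenius theory is the one step that does not hold in the paper's generality. The hypotheses only require $m\geq2$ to be real; for non-integer $m$ the coefficient $m\lambda_{0}r^{m-1}/(\varepsilon+\lambda_{0}r^{m})$ is not analytic at $r=0$, so the classical Frobenius theorem you invoke does not literally apply, and the claim that bounded solutions near $0$ form a one-dimensional family behaving like $r$ would need a substitute argument (e.g.\ an integral-equation/perturbation analysis around the Euler equation, the perturbation being $O(r^{m-1})$). Even for integer $m$ you should note that the indicial roots $1$ and $2-d$ differ by an integer, so the second solution may carry a logarithm — harmless, since it is still unbounded, but it should be said. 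The paper avoids all of this: it solves the two-point problems $Lg_{\tau}=0$ on $(\tau,1)$ with $g_{\tau}(\tau)=\tau$, $g_{\tau}(1)=1$, traps $r<g_{\tau}<r^{\alpha}$ with the same barriers, passes to a limit as $\tau\to0$ (the squeeze gives $g(0)=0$ and continuity at $0$), and proves uniqueness by reduction of order: any bounded solution is $C_{1}\,g(r)\int_{r}^{1}\frac{ds}{g^{2}(s)s^{d-2}(\varepsilon+\lambda_{0}s^{m})}+C_{2}\,g(r)$, and the bound \eqref{DAM002} forces that integral term to blow up like $r^{2-d}$, whence $C_{1}=0$. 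This is valid for all real $m\geq2$ and also explains why the refined bound \eqref{DAM002} is structurally needed, whereas in your scheme it is only an extra estimate. If you either restrict to integer $m$ or replace the Frobenius appeal by such a perturbation argument (or by the paper's compactness-plus-reduction-of-order scheme), the rest of your proof stands.
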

\begin{remark}
We improve the corresponding estimates in Lemma 3.1 of \cite{DLY2021} by solving the explicit constants in \eqref{WZQ001}.
\end{remark}

\begin{proof}
Denote by $g_{\tau}\in C^{2}([\tau,1])$ the solution of $Lg_{\tau}=0$ in $(\tau,1)$ with $g_{\tau}(\tau)=\tau$ and $g_{\tau}(1)=1$, where $0<\tau<1$. Due to the fact that $Lr>0$ and $Lr^{\alpha}<0$ in $(0,1)$, it follows from the maximum principle and strong maximum principle that
\begin{align*}
r<g_{\tau}(r)<r^{\alpha},\quad r\in(\tau,1).
\end{align*}
Then there exists a solution $g\in C([0,1])\cap C^{\infty}((0,1])$ of $Lg=0$ in $(0,1)$ such that $g_{\tau}\rightarrow g$ in $C^{2}_{loc}((0,1])$ as $\tau\rightarrow0$ along a subsequence. Moreover, $r\leq g(r)\leq r^{\alpha}$ in $(0,1)$ and $h(0)=0$. Using the strong maximum principle, we further get $r<g(r)<r^{\alpha}$ in $(0,1)$.

Consider $\underline{g}(r):=\lambda_{0}^{\frac{\beta-\alpha}{m}}r^{\beta}(\varepsilon+\lambda_{0}r^{m})^{\frac{\alpha-\beta}{m}}$ for $\beta\in\mathbb{R}$. Then it follows from a direct calculation that
\begin{align*}
L\underline{g}=&\lambda_{0}^{\frac{\beta-\alpha}{m}}r^{\beta-2}(\varepsilon+\lambda_{0}r^{m})^{\frac{\alpha-\beta}{m}}\bigg((\beta-\alpha)^{2}\Big(\frac{\lambda_{0}r^{m}}{\varepsilon+\lambda_{0}r^{m}}\Big)^{2}\notag\\ &+\big((\alpha-\beta)(d+m+2\beta-3)+m\beta\big)\Big(\frac{\lambda_{0}r^{m}}{\varepsilon+\lambda_{0}r^{m}}\Big)+(d-2+\beta)(\beta-1)\bigg),\;\,\mathrm{in}\;(0,1).
\end{align*}
Denote
\begin{align*}
p(t):=(\beta-\alpha)^{2}t^{2}+\big((\alpha-\beta)(d+m+2\beta-3)+m\beta\big)t+(d-2+\beta)(\beta-1),
\end{align*}
for $0\leq t\leq1$. In light of $p(1)=0$, it suffices to require that
\begin{align*}
p'(t)\leq&2(\beta-\alpha)^{2}+(\alpha-\beta)(d+m+2\beta-3)+m\beta\notag\\
\leq&-(2\alpha+d-3)\beta+2\alpha^{2}+\alpha(d+m-3)\leq0,
\end{align*}
for the purpose of $L\underline{g}\geq0$ in $(0,1)$. This, together with the strong maximum principle and the fact that $\underline{g}(0)=g(0)$ and $\underline{g}(1)<g(1)$, yields that $\underline{g}$ is a subsolution of \eqref{AMR001} in the case of $\beta\geq\frac{2\alpha^{2}+\alpha(d+m-3)}{2\alpha+d-3}$.

On the other hand, let $\overline{g}:=C_{0}(\varepsilon)(r-a_{0}r^{b_{0}})$, where $a_{0}>0$ and $1<b_{0}<m+1$, and $C_{0}(\varepsilon)$ is given in \eqref{WZQ001}. A straightforward computation shows that for $r\in(0,r_{0}(\varepsilon))$,
\begin{align*}
L\overline{g}=C_{0}(\varepsilon)\left(-a_{0}(b_{0}-1)(b_{0}+d-2)r^{b_{0}-2}+\frac{m\lambda_{0}r^{m-1}}{\varepsilon+\lambda_{0}r^{m}}(1-a_{0}b_{0}r^{b_{0}-1})\right)\leq0,
\end{align*}
where $r_{0}(\varepsilon)$ and $C_{0}(\varepsilon)$ are given by \eqref{WZQ001}. Since $\overline{g}(0)=g(0)$ and $\overline{g}(r_{0}(\varepsilon))>g(r_{0}(\varepsilon))$, then we deduce from the strong maximum principle that $g(r)<C_{0}(\varepsilon)r$ in $(0,r_{0}(\varepsilon))$.

In addition, $g$ is actually strictly increasing in $(0,1)$. Otherwise, there exists a constant $r^{\ast}\in(0,1)$ such that $g'(r^{\ast})=0$ and $g''(r^{\ast})\leq0$. Then we get $Lg(r^{\ast})<0$ in virtue of $g(r^{\ast})>0$. This leads to a contradiction. It remains to prove the uniqueness of $g$. Assume that there exists another solution $g_{1}\in L^{\infty}((0,1))\cap C^{\infty}((0,1])$ of \eqref{AMR001} such that $g_{1}(1)=1$. Let $w:=g_{1}g^{-1}$ in $(0,1)$. Then
\begin{align*}
Lg_{1}=\frac{g}{G}(Gw')'=0,\quad\mathrm{in}\;(0,1),
\end{align*}
where $G=g^{2}r^{d-2}(\varepsilon+\lambda_{0}r^{m})$. Hence there are two constants $C_{1}$ and $C_{2}$ such that
\begin{align*}
g_{1}(r)=&g(r)\int_{r}^{1}\frac{C_{1}}{g^{2}(s)s^{d-2}(\varepsilon+\lambda_{0}s^{m})}\,ds+C_{2} g(r),\quad\mathrm{in}\;(0,1).
\end{align*}
From \eqref{DAM001}--\eqref{DAM002}, we obtain
\begin{align*}
&g(r)\int_{r}^{1}\frac{1}{g^{2}(s)s^{d-2}(\varepsilon+\lambda_{0}s^{m})}\,ds\notag\\
&\geq g(r)\int_{r}^{r_{0}(\varepsilon)}\frac{1}{g^{2}(s)s^{d-2}(\varepsilon+\lambda_{0}s^{m})}\,ds\notag\\
&\geq\frac{1}{(d-1)(C_{0}(\varepsilon))^{2}(\varepsilon+\lambda_{0}(r_{0}(\varepsilon))^{m})}\left(r^{2-d}-r(r_{0}(\varepsilon))^{1-d}\right)\rightarrow\infty,\quad\mathrm{as}\;r\rightarrow0,
\end{align*}
which, in combination with the fact that $g$ and $g_{1}$ are bounded, leads to that $C_{1}=0$, $C_{2}=1$ and thus $g=g_{1}$.

\end{proof}

\begin{proof}[Proof of Theorem \ref{thm002}]
\noindent{\bf Step 1.} To begin with, it follows from Taylor expansion that
\begin{align*}
h_{1}(x')=-h_{2}(x')=\frac{|x'|^{m}}{m}+O(|x'|^{2m}),\quad\mathrm{in}\;B_{1}'.
\end{align*}
Let
\begin{align*}
\bar{u}(x')=\fint_{-\frac{\varepsilon}{2}+h_{2}}^{\frac{\varepsilon}{2}+h_{1}}u(x',x_{d})\,dx_{d},\quad\mathrm{in}\;B_{1}'.
\end{align*}
Hence $\bar{u}$ is a solution of
\begin{align*}
\mathrm{div}((\varepsilon+\lambda_{0}|x'|^{m})\nabla\bar{u})=\mathrm{div}F,\quad \mathrm{in}\;B_{1}',
\end{align*}
where $\lambda_{0}$ is given in \eqref{lam}, $F=(F_{1},...,F_{d-1})$, $F_{i}=2|x'|^{m-2}(x_{i}+O(|x'|^{m+1}))\overline{x_{d}\partial_{d}u}+O(|x'|^{2m})\partial_{i}\bar{u}$, $\overline{x_{d}\partial_{d}u}$ denotes the average of $x_{d}\partial_{d}u$ with regard to $x_{d}$ in $(-\varepsilon/2+h_{2},\varepsilon/2+h_{1})$. In light of the fact that $|x_{d}|\leq C(\varepsilon+\lambda_{0}|x'|^{m})$, we deduce from \eqref{U002} and \eqref{U001} that
\begin{align}\label{TQ001}
|F|\leq C(d)|x'|^{m-1}(\varepsilon+\lambda_{0}|x'|^{m}),\quad\mathrm{in}\; B_{1}'.
\end{align}

Since $\varphi$ is odd with respect to $x_{1}$ and the domain $\Omega=B_{5}\setminus\overline{D_{1}\cup D_{2}}$ is symmetric, then it follows from the elliptic theory that $u$ is odd in $x_{1}$ and $u$ is smooth. Then $\bar{u}$ is also odd in $x_{1}$ and $\bar{u}(0')=0$. Based on these facts, we use spherical harmonics to expand $\bar{u}$ as follows:
\begin{align}\label{VDAZ001}
\bar{u}(x')=U_{1,1}(r)Y_{1,1}(\xi)+\sum^{\infty}_{k=2}\sum^{N(k)}_{l=1}U_{k,l}(r)Y_{k,l}(\xi),\quad\mathrm{in}\;B_{1}'\setminus\{0'\},
\end{align}
where $\{Y_{k,l}\}_{k,l}$, which is an orthonormal basis of $L^{2}(\mathbb{S}^{d-2})$, consists of $k$-th degree normalized spherical harmonics, and $U_{k,l}\in C([0,1))\cap C^{\infty}((0,1))$ is given by $U_{k,l}=\int_{\mathbb{S}^{d-2}}\bar{u}(r,\xi)Y_{k,l}(\xi)d\xi$. In view of the fact that $\varepsilon+\lambda_{0}|x'|^{m}$ is independent of $\xi$ and $\bar{u}(0')=0$, we obtain that $U_{1,1}(0)=0$, and
\begin{align*}
LU_{1,1}:=\partial_{rr}U_{1,1}(r)+\left(\frac{d-2}{r}+\frac{m\lambda_{0}r^{m-1}}{\varepsilon+\lambda_{0}r^{m}}\right)\partial_{r}U_{1,1}(r)-\frac{d-2}{r^{2}}U_{1,1}(r)=H(r),
\end{align*}
for $0<r<1$, where
\begin{align*}
H(r)&=\int_{\mathbb{S}^{d-2}}\frac{(\mathrm{div}F)Y_{1,1}(\xi)}{\varepsilon+\lambda_{0}r^{m}}d\xi=\int_{\mathbb{S}^{d-2}}\frac{\partial_{r}F_{r}+\frac{1}{r}\nabla_{\xi}F_{\xi}}{\varepsilon+\lambda_{0}r^{m}}Y_{1,1}(\xi)d\xi\notag\\
&=\partial_{r}\left(\int_{\mathbb{S}^{d-2}}\frac{F_{r}Y_{1,1}}{\varepsilon+\lambda_{0}r^{m}}d\xi\right)+\int_{\mathbb{S}^{d-2}}\left(\frac{m\lambda_{0}r^{m-1}F_{r}Y_{1,1}}{(\varepsilon+\lambda_{0}r^{m})^{2}}-\frac{F_{\xi}\nabla_{\xi}Y_{1,1}}{r(\varepsilon+\lambda_{0}r^{m})}\right)d\xi\notag\\
&=:\partial_{r}A(r)+B(r),\quad\mathrm{in}\;(0,1),
\end{align*}
and $A(r),B(r)\in C^{1}([0,1))$. From \eqref{TQ001}, we know that
\begin{align}\label{AB001}
|A(r)|\leq C(d)r^{m-1},\quad\mathrm{and}\;|B(r)|\leq C(d)r^{m-2},\quad\mathrm{in}\;(0,1).
\end{align}

\noindent{\bf Step 2.} Proof of
\begin{align}\label{QWMM001}
U_{1,1}(r)=C_{1}(\varepsilon)g(r)+O(r^{m-1+\alpha}),\quad\mathrm{in}\;(0,1),
\end{align}
where $\alpha$ is defined by \eqref{degree}, $g$ is the solution of \eqref{AMR001}, $C_{1}(\varepsilon)$ is some constant satisfying that
\begin{align}\label{QWMM002}
C_{1}(\varepsilon)\geq\frac{1}{C_{2}},\quad\text{for some positive constant}\; C_{2}\; \text{independent of}\;\varepsilon.
\end{align}

Define $v:=gw$, where $g$ is the solution of \eqref{AMR001} with $g(0)=0$ and $g(1)=1$, and
\begin{align*}
w(r):=\int_{0}^{r}\frac{1}{g^{2}(s)s^{d-2}(\varepsilon+\lambda_{0}s^{m})}\int^{s}_{0}g(t)t^{d-2}(\varepsilon+\lambda_{0}t^{m})H(t)\,dtds,\quad\mathrm{in}\;(0,1).
\end{align*}
It then follows from a straightforward calculation that
\begin{align*}
Lv=L(gw)=gw''+\left(2g'+\Big(\frac{d-2}{r}+\frac{m\lambda_{0}r^{m-1}}{\varepsilon+\lambda_{0}r^{m}}\Big)g\right)w'=\frac{g}{G}(Gw')'=H,
\end{align*}
where $G=g^{2}r^{d-2}(\varepsilon+\lambda_{0}r^{m})$. In light of $g'>0$ and using \eqref{AB001}, we have
\begin{align*}
&\int^{s}_{0}g(t)t^{d-2}(\varepsilon+\lambda_{0}t^{m})H(t)dt\notag\\
&=\int^{s}_{0}g(t)t^{d-2}(\varepsilon+\lambda_{0}t^{m})A'(t)dt+O(1)g(s)s^{d+m-3}(\varepsilon+\lambda_{0}s^{m})\notag\\
&=-\int_{0}^{s}g'(t)t^{d-2}(\varepsilon+\lambda_{0}t^{m})A(t)dt+O(1)g(s)s^{d+m-3}(\varepsilon+\lambda_{0}s^{m})\notag\\
&=O(1)\left(s^{d+m-3}(\varepsilon+\lambda_{0}s^{m})\int^{s}_{0}g'(t)dt+g(s)s^{d+m-3}(\varepsilon+\lambda_{0}s^{m})\right)\notag\\
&=O(1)g(s)s^{d+m-3}(\varepsilon+\lambda_{0}s^{m}),
\end{align*}
which, together with \eqref{DAM001}, reads that
\begin{align*}
|v(r)|\leq Cg(r)\int^{r}_{0}\frac{s^{m-1}}{g(s)}\leq Cr^{m-1+\alpha}.
\end{align*}
Note that $U_{1,1}-v$ remains bounded and $L(U_{1,1}-v)=0$ for $r\in(0,1)$, it then follows from Lemma \ref{lemma006} that $U_{1,1}-v=C_{1}(\varepsilon)g$. That is, \eqref{QWMM001} holds.

We now prove \eqref{QWMM002}. Based on the assumed symmetric condition on the domain in Theorem \ref{thm002}, let $x=(r,\xi,x_{d})\in\mathbb{R}_{+}\times\mathbb{S}^{d-2}\times\mathbb{R}$ and then \eqref{con002} can be rewritten as follows:
\begin{align}\label{DWQ001}
\begin{cases}
\partial_{rr}u+\frac{d-2}{r}\partial_{r}u+\frac{1}{r^{2}}\Delta_{\mathbb{S}^{d-2}}u+\partial_{dd}u=0,&\mathrm{in}\;B_{5}\setminus\overline{D_{1}\cup D_{2}},\\
\frac{\partial u}{\partial\nu}=0,&\mathrm{on}\;\partial D_{i},\;i=1,2,\\
u=x_{1},&\mathrm{on}\;\partial B_{5}.
\end{cases}
\end{align}
Denote
\begin{align*}
\tilde{u}(r,x_{d}):=\int_{\mathbb{S}^{d-2}}u(r,\xi,x_{d})Y_{1,1}(\xi)d\xi.
\end{align*}
Due to the fact that $u$ is an odd function with respect to $x_{1}$, we get $\tilde{u}(0,x_{d})=0$ for any $x_{d}$. Then multiplying equation \eqref{DWQ001} by $Y_{1,1}(\xi)$ and utilizing integration by parts on $\mathbb{S}^{d-2}$, we obtain that $\tilde{u}(r,x_{d})$ verifies
\begin{align}\label{DWQ002}
\begin{cases}
\partial_{rr}\tilde{u}+\frac{d-2}{r}\partial_{r}\tilde{u}-\frac{d-2}{r^{2}}\tilde{u}+\partial_{dd}\tilde{u}=0,&\mathrm{in}\;\tilde{B}_{5}\setminus\overline{\tilde{D}_{1}\cup \tilde{D}_{2}},\\
\frac{\partial \tilde{u}}{\partial\nu}=0,&\mathrm{on}\;\partial \tilde{D}_{i},\;i=1,2,\\
\tilde{u}=0,&\mathrm{on}\;\{r=0\},\\
\tilde{u}=r,&\mathrm{on}\;\partial \tilde{B}_{5},
\end{cases}
\end{align}
where $\nu$ denotes the unit inner normal of $\partial\tilde{D_{i}}$, $i=1,2,$ and
\begin{align*}
\tilde{B}_{5}:=&\{(r,x_{d})\in\mathbb{R}_{+}\times\mathbb{R}\,|\,r^{2}+x_{d}^{2}<25\},\\
\tilde{D}_{i}:=&\{(r,x_{d})\in\mathbb{R}_{+}\times\mathbb{R}\,|\,r^{m}+|x_{d}+(-1)^{i}(1+\varepsilon/2)|^{m}<1\}.
\end{align*}
Observe that $\tilde{v}(r)=r$ verifies the first line of \eqref{DWQ002} with $\frac{\partial\tilde{v}}{\partial\nu}<0$ on $\partial\tilde{D}_{i}$, $i=1,2.$ Therefore, $r$ becomes a subsolution of \eqref{DWQ002} and we thus have $\tilde{u}\geq r$. This yields that
\begin{align*}
U_{1,1}(r)=\fint_{-\frac{\varepsilon}{2}+h_{2}}^{\frac{\varepsilon}{2}+h_{1}}\tilde{u}(r,x_{d})\,dx_{d}\geq r,
\end{align*}
which, together with \eqref{DAM001} and \eqref{QWMM001}, leads to that
\begin{align*}
r\leq U_{1,1}(r)=C_{1}(\varepsilon)g(r)+O(r^{m-1+\alpha})\leq C_{1}(\varepsilon)r^{\alpha}+\frac{1}{2}r,\quad\mathrm{in}\;(0,r_{0}],
\end{align*}
for some small $\varepsilon$-independent constant $r_{0}$. Then we get
\begin{align*}
C_{1}(\varepsilon)\geq\frac{1}{2}r_{0}^{1-\alpha}.
\end{align*}

\noindent{\bf Step 3.} Combining the results above, we give the proof of Theorem \ref{thm002}. Using \eqref{DAM001} and \eqref{QWMM001}--\eqref{QWMM002}, we obtain that for $\beta\geq\frac{2\alpha^{2}+\alpha(d+m-3)}{2\alpha+d-3}$,
\begin{align}\label{DYDA001}
U_{1,1}(r)\geq\frac{1}{C}g(r)-Cr^{m-1+\alpha}\geq\frac{1}{2C}r^{\beta}(\varepsilon+\lambda_{0}r^{m})^{\frac{\alpha-\beta}{m}},\quad\mathrm{in}\;(0,r_{0}],
\end{align}
where $r_{0}$ is a small positive $\varepsilon$-independent constant. From \eqref{VDAZ001} and \eqref{DYDA001}, we deduce
\begin{align*}
\left(\int_{\mathbb{S}^{d-2}}|\bar{u}(\sqrt[m]{\varepsilon},\xi)|^{2}d\xi\right)^{\frac{1}{2}}\geq|U_{1,1}(\sqrt[m]{\varepsilon})|\geq\frac{1}{C}\varepsilon^{\frac{\alpha}{m}},
\end{align*}
which reads that $|\bar{u}(\sqrt[m]{\varepsilon},\xi_{0})|\geq\frac{1}{C}\varepsilon^{\frac{\alpha}{m}}$ for some $\xi_{0}\in\mathbb{S}^{d-2}$. In view of the fact that $\bar{u}$ is the average of $u$ in the $x_{d}$ direction, we obtain
\begin{align*}
|u(\sqrt[m]{\varepsilon},\xi_{0},x_{d})|\geq\frac{1}{C}\varepsilon^{\frac{\alpha}{m}},\quad\text{for some}\;x_{d}\in(-\varepsilon/2+h_{2}(x'),\varepsilon/2+h_{1}(x')).
\end{align*}
This, together with the fact that $u(0)=0$, implies that \eqref{ARQZ001} holds. The proof is complete.

\end{proof}

\noindent{\bf{\large Acknowledgements.}}
The author would like to thank Prof. C.X. Miao for his constant encouragement and useful discussions. The author was partially supported by CPSF (2021M700358).

\bibliographystyle{plain}

\def\cprime{$'$}

\end{document}